\documentclass[12pt, reqno]{amsart}

\usepackage{version}
\setlength{\textheight}{20cm} \textwidth16cm \hoffset=-2truecm
\usepackage[T1]{fontenc} 
\usepackage[utf8]{inputenc} 
\usepackage{amsmath}
\usepackage{amssymb}
\usepackage{mathrsfs}
\usepackage{amsthm}
\usepackage{latexsym}
\usepackage{wasysym}
\usepackage{dcpic, pictex}
\usepackage{graphicx}
\usepackage{tikz-cd}

\newtheorem{theorem}{Theorem}[section]
\newtheorem{lemma}[theorem]{Lemma}
\newtheorem{corollary}[theorem]{Corollary}
\newtheorem{proposition}[theorem]{Proposition}

\theoremstyle{definition}
\newtheorem{definition}[theorem]{Definition}
\newtheorem{example}[theorem]{Example}

\newtheorem{remark}[theorem]{Remark}
\newtheorem{question}{Question}

\frenchspacing 

\author{Matteo Fiacchi}

\def\O{{\mathcal{O}}}

\def\D{{\mathbb{D}}}
\def\C{{\mathbb{C}}}
\def\R{{\mathbb{R}}}

\def\Z{{\mathbb{Z}}}
\def\N{{\mathbb{N}}}

\def\Bn{{\mathbb{B}^{n}}}
\def\Cn{{\mathbb{C}^{n}}}

\newcommand{\id}{{\sf Id}}
\newcommand{\re}{{\sf Re}}
\newcommand{\im}{{\sf Im}}

\begin{document}
\title{Approximation and Loewner Theory of holomorphic covering mappings} 

\address{M. Fiacchi: Dipartimento di Matematica\\
Universit\`{a} di Roma \textquotedblleft Tor Vergata\textquotedblright\ \\Via Della Ricerca Scientifica 1, 00133 \\
Roma, Italy} \email{fiacchi@mat.uniroma2.it}

\begin{abstract}
We give conditions in order to approximate locally uniformly holomorphic covering mappings of the unit ball of $\Cn$ with respect to an arbitrary norm, with entire holomorphic covering mappings. The results rely on a generalization of the Loewner theory for covering mappings which we develop in the paper.
\end{abstract}
\maketitle

\section{Introduction}

It is a well known fact that in one complex variable the automorphisms group of the complex plane is composed only by affine transformations. Conversely, in higher dimensions the automorphisms group of the Euclidean space is so large to allow approximation of univalent mappings: we state here the Andérsen-Lempert Theorem \cite{AL} in a suitable form for this paper (the result holds in fact for general starlike domains).

\begin{theorem}[Andérsen-Lempert]\label{AL}
For $n\geq2$, let $B=\{z\in\Cn:||z||<1\}$ be the unit ball of $\Cn$ with respect to an arbitrary norm and let $f:B\longrightarrow \Cn$ be an univalent mapping. If there exists a neighborhood $\Lambda$ of $f(B)$ biholomorphic to $\Cn$ (that is, $\Lambda$ is either $\Cn$ itself or a Fatou-Bieberbach domain) such that $(f(B),\Lambda)$ is Runge, then $f$ can be approximated uniformly on compacta of $B$ by univalent mappings of $\Cn$, i.e. for every compact subset $K$ of $B$, $\epsilon>0$ there exists an automorphism of $\Cn$ or a Fatou-Bieberbach mapping $\Psi$ such that $\max_{z\in K}||f(z)-\Psi(z)||<\epsilon$.
\end{theorem}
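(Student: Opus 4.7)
My plan is to transfer the problem to $\Cn$ via the biholomorphism $\Phi$, and then to run the classical Andérsen--Lempert argument: interpolate $\tilde f:=\Phi\circ f$ with the identity through a continuous family of univalent maps, extract the associated time-dependent Herglotz vector field, approximate this field by sums of complete holomorphic vector fields on $\Cn$ using the Runge hypothesis, and integrate the flow to obtain automorphisms of $\Cn$.

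\smallskip

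First I would set $\tilde f := \Phi\circ f$. Because $\Phi$ is a biholomorphism and $(f(B),\Lambda)$ is Runge, the pair $(\tilde f(B),\Cn)$ is Runge (holomorphic functions transport through $\Phi$). If we succeed in approximating $\tilde f$ uniformly on compacta by automorphisms $\Psi_k\in\mathrm{Aut}(\Cn)$, then $\Phi^{-1}\circ\Psi_k:\Cn\to\Lambda\subset\Cn$ is either an automorphism of $\Cn$ (when $\Lambda=\Cn$) or a Fatou--Bieberbach map (when $\Lambda$ is a Fatou--Bieberbach domain), and uniform continuity of $\Phi^{-1}$ on $\Psi_k(K)$ transfers the approximation back to $f$ on the compact $K\subset B$. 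After composing with an affine automorphism of $\Cn$ (absorbed into $\Psi_k$), I may further normalize $\tilde f(0)=0$ and $d\tilde f(0)=\id$.

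\smallskip

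Now, since $B$ is convex, hence starlike at $0$, the rescaling $\tilde f_t(z):=t^{-1}\tilde f(tz)$ for $t\in(0,1]$ extends continuously to $\id$ as $t\to 0^+$, giving a continuous path of univalent maps $\tilde f_t:B\to\Cn$ from $\id$ to $\tilde f$, with nested images $\tilde f_t(B)\subset\tilde f(B)$. Differentiating the transition maps $\tilde f_t\circ\tilde f_s^{-1}$ produces a time-dependent holomorphic Herglotz-type vector field $X_t$ on $\tilde f_t(B)$. The crux of the argument is to approximate $X_t$ uniformly on compacta by time-dependent holomorphic vector fields which are finite sums of \emph{complete} vector fields on $\Cn$ (shears and overshears): the Runge condition on each $\tilde f_t(B)$ permits an Oka--Weil extension of $X_t$ componentwise to an entire vector field on $\Cn$, and the original Andérsen--Lempert approximation lemma writes entire fields as sums of complete ones. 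Integrating the time-$1$ flow of such an approximant gives a composition of time-$1$ flows of complete fields, i.e.\ an element of $\mathrm{Aut}(\Cn)$; continuous dependence of ODE solutions on the right-hand side yields the claimed convergence $\Psi_k\to\tilde f$ on compacta.

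\smallskip

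The main obstacle will be the uniform-in-$t$ control: one needs each $\tilde f_t(B)$ to be Runge in $\Cn$ with Oka--Weil bounds controlled uniformly on $[0,1]$, so that the cumulative error accumulated along the integration of the flow stays below the prescribed $\epsilon$. The standard device is to subdivide $[0,1]$ into finitely many short subintervals and to use the concatenation/semigroup property of the flow: on each short subinterval the transition map is almost the identity and can be approximated by one block of shears, after which the composition of blocks realizes the desired $\Psi_k$. The Fatou--Bieberbach case is automatic from the transfer performed in Step~1.
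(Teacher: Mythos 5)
The paper offers no proof of this statement: it is imported verbatim from \cite{AL} (the version for a starlike source domain is due to Andérsen--Lempert and Forstneri\v{c}--Rosay), so the only comparison available is with the published argument, and your outline follows it faithfully --- conjugate by the biholomorphism $\Phi:\Lambda\to\Cn$, isotope $\tilde f=\Phi\circ f$ to the identity via $\tilde f_t(z)=t^{-1}\tilde f(tz)$, pass to the time-dependent vector field, approximate it by finite sums of complete (shear and overshear) fields using Oka--Weil and the Andérsen--Lempert decomposition lemma, and recover automorphisms by composing short-time flows. As a plan this is correct, and your treatment of the Fatou--Bieberbach alternative (post-composing the approximating automorphisms with $\Phi^{-1}$) is exactly right. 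Two details should be tightened. First, the parenthetical claim that the images are nested, $\tilde f_t(B)\subset\tilde f(B)$, is neither needed nor automatic. Second, and more importantly, what the argument genuinely requires is that \emph{each} $\tilde f_t(B)$ be Runge in $\Cn$; this is not part of the hypothesis, which only concerns $t=1$, but follows from transitivity of Runge pairs: $(tB,B)$ is Runge because $tB$ is convex, hence $(\tilde f(tB),\tilde f(B))$ is Runge by biholomorphic invariance, hence $(\tilde f_t(B),\Cn)$ is Runge by composing with the hypothesis. This is precisely where the convexity (starlikeness) of $B$ enters, and it should be stated explicitly rather than folded into the closing remarks about ``uniform control.'' With that supplied, the remaining steps --- subdivision of $[0,1]$, splitting of the flow into compositions of flows of complete fields, and stability of ODE solutions under perturbation of the right-hand side --- are exactly the Forstneri\v{c}--Rosay scheme, accurately described at the level of an outline rather than a complete proof.
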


This result ensures the approximability of univalent mappings by means of entire univalent mappings (i.e. defined on all $\Cn$) under the assumption of the existence of specific neighborhoods. Another approach to this kind of problems is using Loewner theory, a dynamical method in geometric function theory introduced in dimension one by Charles Loewner in the 1920's \cite{LW}. The main idea of Loewner was to construct a one parameter family of univalent functions whose images are increasing domains, and study them using differential equations \cite{GK}. Using this theory we have the following formulation of Theorem \ref{AL} (in a certain sense it is an equivalent statement, see \cite{FM1} for a discussion about this).

\begin{theorem}
For $n\geq2$ let $f:B\longrightarrow\Omega\subseteq\Cn$ an univalent mapping. If $f$ embeds into Loewner chain, i.e. there exists a Loewner chain $(f_t)_{t\geq0}$ such that $f=f_0$, then there exists a sequence of entire univalent mappings which converges to $f$, uniformly on compacta of $B$.
\end{theorem}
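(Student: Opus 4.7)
The plan is to deduce this result from Theorem~\ref{AL} by using the Loewner chain $(f_t)_{t\geq 0}$ to build an ambient neighborhood $\Lambda$ of $f(B)$ that is biholomorphic to $\Cn$ and for which the pair $(f(B),\Lambda)$ is Runge. Once both properties are secured, Theorem~\ref{AL} immediately delivers a sequence of automorphisms or Fatou--Bieberbach mappings of $\Cn$ converging to $f=f_0$ uniformly on compacta of $B$, which is exactly the statement to be proved.

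The natural candidate for $\Lambda$ is the Loewner range of the chain, i.e.\ the increasing union $\Lambda := \bigcup_{t\geq 0} f_t(B)$, or, more intrinsically, the direct limit of $B$ under the transition maps $\varphi_{s,t}\colon B\to B$, $s\leq t$, satisfying $f_s = f_t\circ \varphi_{s,t}$. First I would recall the structural fact from the Loewner theory of several complex variables that this direct limit carries a canonical complex manifold structure and, being exhausted by biholomorphic images of the ball, is biholomorphic to $\Cn$. Realized inside $\Cn$, it is therefore either $\Cn$ itself or a Fatou--Bieberbach domain, which is precisely the type of neighborhood Theorem~\ref{AL} requires.

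Next I would verify that $(f(B),\Lambda)$ is Runge. This is where the Loewner structure enters decisively: for every $s\leq t$ one has $\varphi_{s,t}(B)\subseteq B$, whence $f_s(B)\subseteq f_t(B)$, and a standard argument (realizing each inclusion via the transition inside the ball and passing to $\Cn$) shows that each $f_s(B)$ is Runge in $f_t(B)$. Since $\Lambda$ is exhibited as an increasing union of Runge subdomains, it follows in particular that $f(B)=f_0(B)$ is Runge in $\Lambda$. Combining the two points, Theorem~\ref{AL} applies and completes the proof.

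The principal obstacle is the structural assertion that the Loewner range is biholomorphic to $\Cn$ (rather than merely a Stein manifold), together with the stage-wise Runge property along the chain. For general (unnormalized) Loewner chains this is nontrivial and is typically established by a Docquier--Grauert / Forstneri\v{c} type approximation argument that realizes $\Lambda$ as an increasing union of Runge domains in $\Cn$; in the normalized case $f_t(0)=0$, $df_t(0)=e^t\,\id$ the range coincides with $\Cn$ and the argument simplifies considerably.
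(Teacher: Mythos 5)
Your approach is essentially the paper's own: the paper states this theorem without proof (deferring to \cite{FM1}), but its proof of the covering analogue, Theorem~\ref{apprxcov}, via Proposition~\ref{p1} and Proposition~\ref{neccond}, follows exactly the route you describe --- take $\Lambda$ to be the Loewner range, which is biholomorphic to $\Cn$ and hence either $\Cn$ or a Fatou--Bieberbach domain, obtain the Runge property of the pair $(f_0(B),\Lambda)$ from the Docquier--Grauert theorem \cite{DG} applied to the continuous increasing family of pseudoconvex domains $\Omega_t$, and then invoke Theorem~\ref{AL}. One correction: your closing assertion that the normalization $f_t(0)=0$, $(df_t)_0=e^t\id$ forces $R(f_t)=\Cn$ is false --- the paper explicitly notes, citing \cite{ABW}, that the range of a normalized Loewner chain can be strictly contained in $\Cn$; it is the range of the associated \emph{normal} chain $(g_t)$ of Proposition~\ref{p1} that equals $\Cn$. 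This does not damage your argument, since the main body of your proof already allows $\Lambda$ to be a Fatou--Bieberbach domain.
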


It is natural and a completely open question whether similar results hold for locally univalent functions. 

In this paper we focus on a particular class of locally univalent functions: holomorphic covering mappings. The main result of the paper is the following.

\begin{theorem}\label{apprxcov}
For $n\geq2$ let $f:B\twoheadrightarrow\Omega\subseteq\Cn$ be a holomorphic covering mapping. If $f$ embeds into a Loewner chain of covering mappings, i.e. there exists a Loewner chain of covering mappings $(f_t)_{t\geq0}$ such that $f=f_0$, then there exists a sequence of entire holomorphic covering mappings  which converges to $f$, uniformly on compacta of $B$.
\end{theorem}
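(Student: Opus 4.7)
The plan is to reduce the covering approximation to the univalent case by lifting the Loewner chain to the universal cover of the terminal domain $\Omega_\infty := \bigcup_{t \geq 0} f_t(B)$, applying the univalent Loewner-chain approximation stated in the introduction, and composing with the covering projection.

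Let $(f_t)_{t \geq 0}$ be the given Loewner chain of coverings, with transition maps $\phi_{s,t} : B \to B$ univalent satisfying $f_s = f_t \circ \phi_{s,t}$ for $0 \leq s \leq t$. The first step---drawing on the Loewner theory of covering mappings developed earlier in the paper---is to identify the universal cover of $\Omega_\infty$ with $\Cn$ (or a Fatou--Bieberbach domain, which we identify with $\Cn$), via a holomorphic covering $\pi : \Cn \twoheadrightarrow \Omega_\infty$. Since $B$ is simply connected, each $f_t$ lifts through $\pi$ to a univalent map $\tilde f_t : B \hookrightarrow \Cn$; starting from a fixed lift $\tilde f_0$ of $f$, the compatibility $\tilde f_s = \tilde f_t \circ \phi_{s,t}$ determines the remaining lifts uniquely, and $(\tilde f_t)_{t \geq 0}$ is a Loewner chain of univalent mappings $B \to \Cn$.

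Applying the univalent Loewner-chain approximation (the second theorem of the introduction) to $\tilde f_0$ yields a sequence $(\tilde F_k)$ of entire univalent mappings $\Cn \to \Cn$ (automorphisms of $\Cn$ or Fatou--Bieberbach mappings) with $\tilde F_k \to \tilde f_0$ uniformly on compacta of $B$. Set $F_k := \pi \circ \tilde F_k$. Each $F_k$ is entire, and continuity of $\pi$ gives $F_k \to \pi \circ \tilde f_0 = f$ uniformly on compacta of $B$.

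The main obstacle is verifying that each $F_k$ is itself a covering mapping onto its image. When $\tilde F_k$ is an automorphism of $\Cn$ this is immediate, since $F_k$ is then the composition of a biholomorphism with the covering $\pi$; however, when $\tilde F_k$ has proper Fatou--Bieberbach image $V_k \subsetneq \Cn$, the restriction $\pi|_{V_k} : V_k \to \pi(V_k)$ need not be a covering (restrictions of coverings to non-saturated open sets generically fail to be coverings). I expect to resolve this either by refining the lifted chain $(\tilde f_t)$ so that its union domain is the whole of $\Cn$---forcing the univalent approximations to be automorphisms rather than Fatou--Bieberbach mappings---or by controlling the shape of $V_k$ with respect to the deck group of $\pi$, exploiting the equivariance built into the covering Loewner chain. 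Securing this compatibility between the Andérsen--Lempert-type approximation and the deck transformations appears to be the technical core of the proof.
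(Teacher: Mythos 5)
Your overall strategy --- lift $f$ through a normalized covering $\Psi:\Cn\twoheadrightarrow R(f_t)$ of the Loewner range, approximate the univalent lift by entire univalent maps, and push the approximants back down with $\Psi$ --- is exactly the paper's. Proposition \ref{fact} produces the covering $\Psi$ and the lifted normal chain $(g_t)$ with $f_t=\Psi\circ g_t$, and Theorem \ref{genapcov} (applied with $\Lambda=R(f_t)$, whose hypotheses are checked in Proposition \ref{neccond}) carries out the approximation; the injectivity of the lift, which you attribute to uniqueness of lifts, is justified there by Lemma \ref{homeolift} together with the injectivity of $\pi_1(\Omega)\to\pi_1(R(f_t))$ from Proposition \ref{p1iso}.

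However, the step you explicitly leave open is not a loose end but the actual content of the proof, and as written your argument does not close. You need the approximating entire univalent maps to be automorphisms of $\Cn$ (so that $\Psi\circ\Phi_k$ is again a covering), and for that you must show that the lifted image $g(B)$ is Runge in $\Cn$. The paper does this in Theorem \ref{genapcov}: for a compact $K\subseteq g(B)$, the Runge-ness of $(\Omega,\Lambda)$ --- obtained from Docquier--Grauert applied to the increasing family of pseudoconvex domains $\Omega_t$ --- together with Oka--Weil gives $\Psi(\hat K)\subseteq\Omega$, hence $\hat K\subseteq\Psi^{-1}(\Omega)=\bigsqcup_F F(g(B))$, a \emph{disjoint} union over representatives of the deck group by a dichotomy claim ($F(g(B))$ either equals $g(B)$ or is disjoint from it); since every connected component of $\hat K$ meets $K$, it follows that $\hat K\subseteq g(B)$. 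This is precisely your second suggested route, made precise. Your first suggested route is also workable in spirit, since Proposition \ref{p2} already forces the lifted normal chain to have range $\Cn$, but one must still convert that into Runge-ness of $g(B)$ before Andérsen--Lempert yields automorphisms rather than Fatou--Bieberbach maps. In short, your proposal stops exactly where the proof's essential work begins.
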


Such a result is based on a general Loewner theory for holomorphic covering mappings which we develop in the paper, and on a abstract approximation theorem (see Theorem \ref{genapcov}). We also provide explicit examples of applications of our construction (see Example \ref{exball} and Theorem \ref{aprxpol}).

\section{Classical Loewner theory}

We give a fast overview of Loewner theory (for more details see for instance \cite{GK} and \cite{PMK}). 

Let $B:=\{z\in\Cn: ||z||<1 \}$ be the unit ball of $\Cn$ with respect to an arbitrary norm $||\cdot||$. A \textit{(normalized) Loewner chain} is a family of univalent mappings $(f_t:B\longrightarrow\Cn)_{t\geq0}$  with $\Omega_s:=f_s(B)\subseteq f_t(B)$ for $s\leq t$ and such that $f_t(0) = 0$ and $(df)_0=e^t\id$ for all $t\geq 0$.
We also define the \textit{Loewner range} of a Loewner chain as
\[
R(f_t)=\bigcup_{t\geq0}\Omega_t\subseteq\Cn.
\]
The Loewner range is always biholomorphic to $\Cn$,  although, for $n>1$, it can be strictly contained in $\Cn$ (see \cite{ABW}). Furthermore a Loewner chain $(f_t)_{t\geq0}$ is \textit{normal} if $\{e^{-t}f_t(\cdot)\}_{t\geq0}$ is a normal family.

We recall the following result

\begin{proposition}\cite{GK}\label{p1}
If $(f_t)_{t\geq0}$ is a Loewner chain, then there exist an unique normalized biholomorphism $\Psi:\Cn \longrightarrow R(f_t)$ and normal Loewner chain $(g_t)_{t\geq0}$ such that for each $t\geq0$
$$f_t=\Psi \circ g_t. $$
\end{proposition}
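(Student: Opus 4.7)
The plan is to produce a canonical normal Loewner chain $(g_t)$ that shares the same transition maps $v_{s,t} := f_t^{-1}\circ f_s$ as $(f_t)$, and then to define $\Psi$ as the natural gluing of the maps $f_t\circ g_t^{-1}$ on the increasing family of images $g_t(B)$.

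\emph{Step 1 (the normal chain).} The transition maps $v_{s,t}:B\to B$ are univalent with $v_{s,t}(0)=0$, $(dv_{s,t})_0 = e^{s-t}\id$, and satisfy the cocycle $v_{s,u} = v_{t,u}\circ v_{s,t}$ for $s\leq t\leq u$. For each $s\geq 0$ I would consider the renormalized family
\[
\phi^{(s)}_t := e^{t-s}\, v_{s,t}\;:\;B\longrightarrow\Cn, \qquad t\geq s,
\]
of univalent mappings with $\phi^{(s)}_t(0)=0$ and $(d\phi^{(s)}_t)_0 = \id$. The first task is to show this family is relatively compact in the compact-open topology, exploiting the self-map property $v_{s,t}(B)\subseteq B$ together with distortion estimates coming from the Kobayashi metric of the convex domain $B$. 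A diagonal argument over $s$ then produces a subsequential limit $h_s := \lim_{t\to\infty}\phi^{(s)}_t$, and passing to the limit in the cocycle yields the identity $h_s = e^{t-s}\, h_t\circ v_{s,t}$. Setting $g_t := e^t h_t$, this becomes $g_s = g_t\circ v_{s,t}$, so $g_s(B)\subseteq g_t(B)$; together with $(dg_t)_0 = e^t\id$, this shows $(g_t)$ is a Loewner chain, manifestly normal because $e^{-t}g_t = h_t$ is relatively compact.

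\emph{Step 2 (the biholomorphism).} Since $(f_t)$ and $(g_t)$ have the same transition maps, for $w=g_s(z)\in g_s(B)\subseteq g_t(B)$ one computes $f_t\circ g_t^{-1}(w) = f_t(v_{s,t}(z)) = f_s(z) = f_s\circ g_s^{-1}(w)$, so the rule $\Psi\big|_{g_t(B)} := f_t\circ g_t^{-1}$ defines a biholomorphism of $\bigcup_t g_t(B)$ onto $R(f_t)$, with $\Psi(0)=0$ and $(d\Psi)_0 = \id$ read off at $t=0$. To identify $\bigcup_t g_t(B)$ with $\Cn$ I would invoke the fact that the Loewner range of a normal chain equals $\Cn$: any subsequential univalent limit $h_\infty$ of $\{h_t\}$ has open image containing a ball $B(0,c)$ for some $c>0$, whence $g_t(B) = e^t h_t(B)\supseteq e^t B(0,c/2)$ eventually, and monotonicity forces $\bigcup_t g_t(B) = \Cn$.

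\emph{Step 3 (uniqueness).} Given a second pair $(\Psi', (g_t'))$, the composition $\Phi := \Psi^{-1}\circ\Psi'$ is a normalized biholomorphism $\Cn\to\Cn$ with $g_t = \Phi\circ g_t'$, equivalently $h_t(z) = e^{-t}\Phi(e^t h_t'(z))$. Fix $z\in B\setminus\{0\}$ and pass to a subsequence with $h_t'\to h_\infty'$ univalent: then $|e^t h_t'(z)|\to\infty$ while $|h_t(z)|$ stays bounded, so $|\Phi(e^t h_t'(z))| = O(e^t)$. Varying $z$ so that $h_\infty'(z)$ sweeps out an open subset of $\Cn$, this forces $\Phi$ to grow at most linearly at infinity; by Liouville, $\Phi$ is affine, and the normalization $\Phi(0)=0$, $(d\Phi)_0 = \id$ forces $\Phi = \id$, hence $\Psi'=\Psi$ and $g_t'=g_t$.

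\emph{Main obstacle.} The real work is concentrated in Step 1, namely establishing relative compactness of the renormalized family $\phi^{(s)}_t = e^{t-s}v_{s,t}$ and the existence of a limit at infinity. In one complex variable this is immediate from the classical Koebe-type bounds on the class $\mathcal{S}$, but for $n\geq 2$ with an arbitrary norm the family of normalized univalent self-maps of $B$ carries no such automatic distortion estimates, so one must exploit both the cocycle identity and the bound $v_{s,t}(B)\subseteq B$ to produce the required compactness.
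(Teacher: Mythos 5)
First, a point of comparison: the paper does not prove this proposition at all --- it is quoted from \cite{GK} --- so the relevant benchmark is the standard argument of Graham--Kohr/Graham--Hamada--Kohr, and your outline retraces exactly that route: obtain $g_s$ as a limit of $e^{t}\phi_{s,t}$ (your $e^{t-s}v_{s,t}$ up to the factor $e^{s}$), glue $\Psi:=f_t\circ g_t^{-1}$ over the exhaustion $g_t(B)$, and derive uniqueness from the normalization plus a linear growth bound and Liouville. Steps 2 and 3 are essentially correct as sketched, granting the growth estimates discussed below.

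The genuine gap is the one you flag yourself and then leave open: the relative compactness of $\{e^{t-s}v_{s,t}\}_{t\geq s}$. This does \emph{not} follow from ``distortion estimates coming from the Kobayashi metric of the convex domain $B$'', nor from any general property of normalized univalent maps --- for $n\geq2$ the class of normalized univalent maps of $B$ is not a normal family, which is precisely why ``normal Loewner chain'' is a nontrivial hypothesis. The missing ingredient is dynamical: a family satisfying (EF1)--(EF3) is locally Lipschitz in $t$ and solves the Loewner ODE $\partial_t\phi_{s,t}=-h(\phi_{s,t},t)$ for a Herglotz vector field with $(dh(\cdot,t))_0=\id$ (this is the content of \cite{ABinfgen}, \cite{GHK}); the Carath\'eodory-class bounds $\|z\|^2\tfrac{1-\|z\|}{1+\|z\|}\leq\re\,\ell_z(h(z,t))\leq\|z\|^2\tfrac{1+\|z\|}{1-\|z\|}$ (with $\ell_z$ a support functional, valid for an arbitrary norm) then integrate to
\begin{equation*}
e^{t-s}\,\frac{\|\phi_{s,t}(z)\|}{(1-\|\phi_{s,t}(z)\|)^2}\leq\frac{\|z\|}{(1-\|z\|)^2},
\qquad
e^{t-s}\,\frac{\|\phi_{s,t}(z)\|}{(1+\|\phi_{s,t}(z)\|)^2}\geq\frac{\|z\|}{(1+\|z\|)^2}.
\end{equation*}
The first inequality is what gives the local uniform boundedness of $e^{t-s}v_{s,t}$ (hence your Step 1, via a diagonal subsequence and the cocycle identity, which is fine); the second is what Step 3 silently uses twice, both to know that $e^{t}h_t'(z)\to\infty$ for $z\neq0$ and to convert ``$\|\Phi(w)\|=O(e^t)$ on $g_t'(B)$'' into an honest bound $\|\Phi(w)\|\leq C\|w\|$ for large $\|w\|$ (one must locate each large $w$ as $g_t'(z)$ with $\|z\|$ bounded away from both $0$ and $1$). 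Without importing these estimates the proposal does not stand on its own; with them, it closes into the proof of \cite{GK}.
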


Let $(f_t)_{t\geq0}$ be a Loewner chain. We define for each $0\leq s\leq t$
$$\phi_{s,t}=f_t^{-1}\circ f_s. $$

The family $(\phi_{s,t})_{0\leq s\leq t}$ is called the \textit{evolution family associated to the chain $(f_t)_{t\geq0}$}: it has the following properties 

(EF1) $ (d\phi_{s,t})_0=e^{s-t}\id$;

(EF2) $\phi_{s,s}=\id_{B}$ for each $s\geq0$;

(EF3) $\phi_{s,t}=\phi_{u,t}\circ  \phi_{s,u}$ for each $0\leq s\leq u\leq t$.\\
Evolution families are well studied, with a strong link with Loewner theory and semicomplete vector fields (see for instance \cite{GK}, \cite{ABHK} and \cite{ABinfgen}). From the properties (EF1-3) we have that $(\phi_{s,t})_{0\leq s\leq t}$ are locally uniformly Lipschitz \cite{ABinfgen}, i.e. for each $T>0$ and $r\in(0,1)$ there exists a positive constant $M(T,r)$ such that 
$$ || \phi_{s,t}(z)-\phi_{s,u}(z)||\leq M(T,r)|t-u|, \ z\in rB, \ 0\leq s\leq u\leq t \leq T,$$
and this implies the continuity $(f_t)_{t\geq0}$ with respect to $t$.

Furthermore, for each $0\leq s\leq t$ the mappings $\phi_{s,t}$ are univalent \cite{ABHK}.
Finally we have the following 

\begin{proposition}\cite{GHK}\label{p2}
Let $(\phi_{s,t})_{0\leq s\leq t}$ be a family of self mappings of $B$ such that (EF1-3) hold, then there exists an unique normal Loewner chain $(g_t)_{t\geq0}$ such that $\phi_{s,t}=g_t^{-1}\circ g_s$ for each $0\leq s\leq t$.
\end{proposition}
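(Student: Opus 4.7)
I would construct $(g_t)$ via the limit formula
\[
g_t(z) := \lim_{T\to\infty}e^T\phi_{t,T}(z),\qquad z\in B,\ t\ge 0.
\]
The formula is forced by the relation to be proved: $\phi_{s,t}=g_t^{-1}\circ g_s$ rewrites as $g_s=g_t\circ\phi_{s,t}$, and specializing $s=t$, $t=T$ gives $g_t=g_T\circ\phi_{t,T}$. Since $(dg_T)_0=e^T\id$ and $\phi_{t,T}(z)\to 0$ as $T\to\infty$ (because $(d\phi_{t,T})_0=e^{t-T}\id\to 0$), a first-order expansion of $g_T$ at the origin yields $g_T(\phi_{t,T}(z))\approx e^T\phi_{t,T}(z)$ for large $T$.

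The technical heart of the proof is the existence of this limit locally uniformly on $B$, which I would establish by a Cauchy argument. For $t\le T_1\le T_2$, (EF3) yields
\[
e^{T_2}\phi_{t,T_2}(z)-e^{T_1}\phi_{t,T_1}(z)=e^{T_2}\phi_{T_1,T_2}\bigl(\phi_{t,T_1}(z)\bigr)-e^{T_1}\phi_{t,T_1}(z).
\]
Now $w\mapsto e^{T_2}\phi_{T_1,T_2}(w)$ is univalent on $B$ with value $0$ and differential $e^{T_1}\id$ at the origin, so a Koebe-type distortion bound for univalent maps on the unit ball of $(\Cn,\|\cdot\|)$ gives a quadratic estimate of the shape $\|e^{T_2}\phi_{T_1,T_2}(w)-e^{T_1}w\|\le Ce^{T_1}\|w\|^2$. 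Combining this with a growth bound $\|\phi_{t,T_1}(z)\|\le C'e^{t-T_1}$ on compacta of $B$ (from the same machinery together with the Schwarz lemma, using $\phi_{t,T_1}(0)=0$) produces
\[
\bigl\|e^{T_2}\phi_{t,T_2}(z)-e^{T_1}\phi_{t,T_1}(z)\bigr\|\le C''e^{2t-T_1},
\]
which is Cauchy as $T_1\to\infty$ uniformly on compacta of $B$.

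Once the limit exists, $g_t$ is holomorphic, and univalent by Hurwitz because each approximant is univalent with the common non-degenerate differential $e^t\id$ at $0$; the normalizations $g_t(0)=0$ and $(dg_t)_0=e^t\id$ are built in. The identity $\phi_{s,T}=\phi_{t,T}\circ\phi_{s,t}$ from (EF3), inserted into the limit defining $g_s$, gives $g_s=g_t\circ\phi_{s,t}$, hence $g_s(B)\subseteq g_t(B)$ and $\phi_{s,t}=g_t^{-1}\circ g_s$; normality of $\{e^{-t}g_t\}_{t\ge 0}$ is another consequence of the same growth estimates. For uniqueness, a second normal chain $(\tilde g_t)$ with the same evolution family yields a well-defined normalized biholomorphism $\Psi:R(g_t)\to R(\tilde g_t)$ via $\Psi\circ g_t=\tilde g_t$ (the definitions of $\Psi$ on $g_s(B)$ and on $g_t(B)$ agreeing on the overlap, thanks to the shared evolution family), and since both chains are already normal, Proposition \ref{p1} applied to $(\tilde g_t)$ forces $\Psi=\id$, so $\tilde g_t=g_t$. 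The chief obstacle is precisely the quadratic Koebe-type estimate underlying the Cauchy argument: on the unit ball of an arbitrary norm in $\Cn$ such distortion bounds are considerably more delicate than their one-variable analogs, which is where the machinery developed in the cited works is genuinely needed.
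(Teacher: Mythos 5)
The paper itself offers no proof of this proposition --- it is imported from \cite{GHK} --- so I am measuring your argument against the standard proof in that reference. Your limit formula $g_t(z)=\lim_{T\to\infty}e^T\phi_{t,T}(z)$ is indeed the construction used there, and your uniqueness argument via Proposition \ref{p1} is correct. The problem is the existence half: the two estimates on which your Cauchy argument rests are not merely ``delicate'' in dimension $n\ge 2$, they are false at the level of generality at which you invoke them. Your Koebe-type bound asks that the normalized univalent map $k:=e^{T_2-T_1}\phi_{T_1,T_2}$ satisfy $\|k(w)-w\|\le C\|w\|^2$ with $C$ depending only on $\|w\|\le r$; but the class of normalized univalent mappings of the ball is not locally bounded for $n\ge2$ (take $k(z_1,z_2)=(z_1+Mz_2^2,z_2)$ on the Euclidean ball: univalent, $k(0)=0$, $dk_0=\id$, yet $\|k(w)-w\|=M|w_2|^2$ with $M$ arbitrary), so no such distortion theorem exists. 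Similarly, the growth bound $\|\phi_{t,T_1}(z)\|\le C'e^{t-T_1}$ on compacta does not follow from the Schwarz lemma together with univalence: $\phi(z_1,z_2)=(\epsilon z_1+z_2^2/2,\,\epsilon z_2)$ is a univalent self-map of $\mathbb{B}^2$ fixing $0$ with $d\phi_0=\epsilon\,\id$, yet $\|\phi(0,r)\|\ge r^2/2$ independently of $\epsilon$.

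Both estimates \emph{are} true for the transition mappings of an evolution family, but only because such a family is generated by a Herglotz vector field $h(\cdot,t)$ lying in the class $\mathcal{M}$ of normalized semicomplete vector fields, which --- unlike the class of normalized univalent maps --- is locally bounded. The quadratic estimate that actually drives the Cauchy argument in \cite{GHK} is the generator estimate $\|h(z,t)+z\|\le C(r)\|z\|^2$ (in the normalization $d h(\cdot,t)_0=-\id$), fed into $\frac{\partial}{\partial T}\bigl(e^T\phi_{t,T}(z)\bigr)=e^T\bigl(\phi_{t,T}(z)+h(\phi_{t,T}(z),T)\bigr)$, and the growth bound comes from integrating the resulting differential inequality for $\|\phi_{t,T}(z)\|$. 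So to make your outline work you must first pass from (EF1--3) --- together with the local Lipschitz regularity in $(s,t)$, which the purely algebraic axioms do not supply and which the paper is implicitly assuming --- to the Loewner ODE, and derive both estimates from the generator; at that point you are reconstructing the proof of \cite{GHK} and \cite{ABHK} rather than shortcutting it. (In dimension one your outline is essentially Pommerenke's proof, since there both inputs do hold for univalent maps by the classical Koebe theory; it is precisely the failure of those compactness statements for $n\ge2$ that makes the parametric representation unavoidable.)
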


\section{Covering mappings}
We recall some basic notions.

Let $D$ and $\Omega$ be two complex manifolds, a local biholomorphism $f:D\twoheadrightarrow\Omega$ is an \textit{holomorphic covering mappings} (or, for short in this paper, just a \textit{covering mappings}) if for each path $\gamma$ in $\Omega$ with initial point $x$ and for each point $y\in f^{-1}(x)$, there exists an unique path $\tilde{\gamma}$ in $D$ with initial point $y$ such that $f\circ\tilde{\gamma}=\gamma$.

Let $f:D\twoheadrightarrow\Omega\subseteq\Cn$ be a covering mapping, we denote with $DT(f):=\{\Psi\in \text{Aut}(D) : f\circ\Psi=f \}$ the \textit{group of deck transformations of $f$}. We recall that $DT(f)$ is a subgroup of Aut($D$) that acts freely and properly discontinuous on $D$.

In this section we give some useful results concerning covering mappings.

\begin{remark}\label{covuni}
Let $\Omega\subset\Cn$, with $0\in\Omega$, be a domain covered by $B$, then there exists at most one covering mapping $f:B\twoheadrightarrow \Omega$ such that $f(0)=0$ and $df_0=\lambda\id$ for some $\lambda>0$. Indeed, let $\tilde{f}:B\twoheadrightarrow \Omega$ be an another covering mapping such that $\tilde{f}(0)=0$ and $d\tilde{f}_0=\tilde{\lambda}\id$, then for the uniqueness of the universal covering there exists an automorphism $\Psi$ of $B$ such that $\Psi(0)=0$, $d\Psi_0=\lambda\tilde{\lambda}^{-1}\id$ and $f=\tilde{f}\circ\Psi$.

\[
\begin{tikzcd}[column sep=2pc,row sep=3pc]
\textrm{$B$} \arrow[rr,rightarrow,"\Psi"] \arrow[rd,rightarrow,"f"] & & \textrm{$B$} \arrow[ld,rightarrow,"\tilde{f}"] \\
 & \textrm{$\Omega$} &
\end{tikzcd}
\]
Now, by Cartan uniqueness theorem for bounded domains, $\Psi$ has to be the identity and then $f=\tilde{f}$.  
\end{remark}

We also need the following topological lemma.

\begin{lemma}\label{homeolift}
Let $Y$ be a connected topological manifold and $X\subseteq Y$ be connected open set. Let $f:\tilde{X}\twoheadrightarrow X$ and $\Psi:\tilde{Y}\twoheadrightarrow Y$ be two universal topological covering mappings and consider $g:\tilde{X}\longrightarrow \tilde{Y}$ a lifting of $f$ with respect to $\Psi$. Let $i:X\rightarrow Y$ denote the continuous inclusion.
\[
\begin{tikzcd}[column sep=3pc,row sep=3pc]
\textrm{$\tilde{X}$} \arrow[r,rightarrow,"g"] \arrow[d,rightarrow,"f"] & \textrm{$\tilde{Y}$} \arrow[d,rightarrow,"\Psi"] \\
\textrm{$X$} \arrow[r,rightarrow,"i"] & \textrm{$Y$}
\end{tikzcd}
\]
If the morphism $i^*:\pi_1(X)\longrightarrow\pi_1(Y)$ is injective then $g$ is injective.
\end{lemma}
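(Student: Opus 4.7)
The plan is to argue by contradiction / by unwinding the deck transformation group. Suppose $a, b \in \tilde{X}$ satisfy $g(a) = g(b)$. Applying $\Psi$ and using $\Psi \circ g = i \circ f$, we get $i(f(a)) = i(f(b))$, and since $i$ is an inclusion this forces $f(a) = f(b)$. So $a$ and $b$ lie in the same fibre of $f$, and by standard covering space theory there is a unique deck transformation $\sigma \in DT(f)$ with $\sigma(a) = b$; our task reduces to proving $\sigma = \id$.

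Next I would translate this into a statement about fundamental groups. Since $\tilde{X}$ is path-connected (as the universal cover of a connected, hence path-connected, manifold $X$), choose a path $\alpha:[0,1]\to \tilde{X}$ from $a$ to $b$. Then $\gamma := f \circ \alpha$ is a loop in $X$ based at $x_0 := f(a)$, and under the usual correspondence its class $[\gamma] \in \pi_1(X, x_0)$ is precisely the element of the deck group that carries $a$ to $b$, i.e.\ it represents $\sigma$. Showing $\sigma = \id$ is therefore equivalent to showing $[\gamma] = 1$ in $\pi_1(X, x_0)$.

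Now I would use the hypothesis on $g$. The curve $g \circ \alpha$ starts at $g(a)$ and ends at $g(b) = g(a)$, so it is a loop in $\tilde{Y}$; as $\tilde{Y}$ is simply connected it is null-homotopic. Pushing this null-homotopy down via $\Psi$ yields a null-homotopy in $Y$ of $\Psi \circ g \circ \alpha = i \circ f \circ \alpha = i \circ \gamma$. Hence $i_*[\gamma] = 0$ in $\pi_1(Y, i(x_0))$. By the injectivity hypothesis on $i_*$ we conclude $[\gamma] = 0$, so $\sigma = \id$ and $a = b$, proving $g$ is injective.

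The argument is essentially a diagram chase in covering space theory, so I do not expect any serious obstacle; the only point that requires a little care is making sure the correspondence between deck transformations and loop classes is applied with a consistent basepoint. One must verify that $X, Y, \tilde{X}, \tilde{Y}$ are all path-connected (which follows from $X$, $Y$ being connected manifolds, hence locally path-connected, together with the path-connectedness of universal covers of such spaces), so that the identifications $DT(f) \cong \pi_1(X, x_0)$ and the homotopy lifting criterion for $[\gamma] = 1 \Leftrightarrow \sigma = \id$ are available.
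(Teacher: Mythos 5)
Your argument is correct and is essentially the same as the paper's: both connect the two preimages by a path in $\tilde{X}$, observe that its image under $g$ is a loop in the simply connected space $\tilde{Y}$, push the null-homotopy down to $Y$, and use the injectivity of $i^*$ to conclude the loop is trivial in $X$ so the original path is closed. Your detour through the deck transformation group $DT(f)\cong\pi_1(X,x_0)$ is a harmless reformulation of the paper's final step (which instead just notes that a null-homotopic loop lifts to a closed path).
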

\proof 
Let $x_0,x_1$ be two points in $\tilde{X}$ such that $g(x_0)=g(x_1)$, we have to prove that $x_0=x_1$. Consider a path $\gamma$ such that $\gamma(0)=x_0$ and $\gamma(1)=x_1$: notice that the path $f\circ\gamma$ is closed in $X$. Consider its unique lifting $\tilde{\gamma}$ with respect to $\Psi$ such that $\tilde{\gamma}(0)=g(x_0)$, by the uniqueness of the lifting we have $\tilde{\gamma}=g\circ\gamma$. Now
$$\tilde{\gamma}(0)=g(\gamma(0))=g(x_0)=g(x_1)=g(\gamma(1))=\tilde{\gamma}(1)$$
i.e. $\tilde{\gamma}$ is a closed path, then $f\circ\gamma$ is homotopic to the constant path in $Y$. By the injectivity of $i^*$, $f\circ\gamma$ is also homotopic to the constant path in $X$, and then its lifting $\gamma$ is a closed path, i.e. $x_0=x_1$.
\endproof

\section{Loewner chains of covering mappings}

Now we can give the notion of Loewner chain of covering mappings.

\begin{definition}
A \textit{Loewner chain of covering mappings} is a family $(f_t)_{t\geq0}$ of holomorphic mappings on $B$ such that

1) $f_t:B\rightarrow\Cn$ is a covering mapping on a pseudoconvex domain $\Omega_t$ for each $t\geq0$;

2) $f_t(0)=0$, $(df_t)_0=e^t\id$ for each $t\geq0$;

3) $\Omega_s\subseteq\Omega_t$ for each $s\leq t$.\\
Finally, we define the \textit{Loewner range} as the set $R(f_t):=\bigcup_{t\geq0}\Omega_t\subseteq\Cn$.
\end{definition}

\begin{example}\label{annuex}
Let $g$ be the holomorphic function on the unit disk given by
$$g(z)=\frac{i}{2}\log\Bigl(\frac{1-iz}{1+iz}\Bigr), $$
it is a biholomorphism between the disk $\D$ and the vertical strip $\{z\in\C: |\re[z]|<\frac{\pi}{4}\}$. Consider the normalized covering mapping $\Psi(z)=e^z -1$ from $\C$ to $\C\backslash\{-1\}$. Now, the family
$$f_t(z):=\Psi(e^tg(z))=\Bigl(\frac{1-iz}{1+iz}\Bigr)^{\frac{i}{2}e^t}-1$$
is a Loewner chain of covering mappings with images the annuli $A_t:=\{z\in\C: r_t^{-1}<|z+1|<r_t \}$ where $r_t:=e^{\frac{\pi}{4}e^t}$, and Loewner range $\C\backslash\{-1\}$.
\end{example}

For each $s\leq t$ we can lift  the inclusion $i_{s,t}$ of $\Omega_s$ into $\Omega_t$ with respect to $f_t$ and so we can construct an holomorphic function $\phi_{s,t}:B\longrightarrow B$ such that $\phi_{s,t}(0)=0$ and $f_s=f_t\circ \phi_{s,t}$.

\[
\begin{tikzcd}[column sep=3pc,row sep=3pc]
\textrm{$B$} \arrow[r,rightarrow,"\phi_{s,t}"] \arrow[d,rightarrow,"f_s"] & \textrm{$B$} \arrow[d,rightarrow,"f_t"] \\
\textrm{$\Omega_s$} \arrow[r,rightarrow,"i_{s,t}"] & \textrm{$\Omega_t$}
\end{tikzcd}
\]

The family $(\phi_{s,t})_{0\leq s\leq t}$ has the following properties.

\begin{proposition}\label{efcov}
Let $(f_t)_{t\geq0}$ be a Loewner chain of covering mappings, and let $(\phi_{s,t})_{0\leq s\leq t}$ be a family of holomorphic self mappings of $B$ such that $\phi_{s,t}(0)=0$ and $f_s=f_t\circ \phi_{s,t}$ for each $0\leq s\leq t$, then

1) $ (d\phi_{s,t})_0=e^{s-t}\id$;

2) $\phi_{s,s}=\id_{B}$ for each $s\geq0$;

3) $\phi_{s,t}=\phi_{u,t}\circ  \phi_{s,u}$ for each $0\leq s\leq u\leq t$.\\
Therefore $(\phi_{s,t})_{0\leq s\leq t}$ is an evolution family.
\end{proposition}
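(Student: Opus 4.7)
Each of the three properties follows from pairing the defining identity $f_s=f_t\circ\phi_{s,t}$ with either a differentiation at the origin or with the uniqueness of liftings through a covering map.

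For (1) I would simply differentiate the relation $f_s=f_t\circ\phi_{s,t}$ at $0$. Since $\phi_{s,t}(0)=0$, the chain rule gives
\[
e^s\id=(df_s)_0=(df_t)_0\circ(d\phi_{s,t})_0=e^t\,(d\phi_{s,t})_0,
\]
so $(d\phi_{s,t})_0=e^{s-t}\id$.

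For (2) note that $\phi_{s,s}:B\to B$ satisfies $\phi_{s,s}(0)=0$ and, by part (1), $(d\phi_{s,s})_0=\id$. Since $B$ is bounded, Cartan's uniqueness theorem immediately forces $\phi_{s,s}=\id_B$. (Equivalently, one can argue that both $\phi_{s,s}$ and $\id_B$ are liftings of $f_s$ through the covering $f_s$ agreeing at $0$, and invoke uniqueness of lifts.)

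For (3), fix $0\leq s\leq u\leq t$ and consider the two holomorphic maps $\phi_{s,t}$ and $\phi_{u,t}\circ\phi_{s,u}$ from $B$ to $B$. Both send $0$ to $0$ (using $\phi_{s,u}(0)=0=\phi_{u,t}(0)$), and both satisfy
\[
f_t\circ\phi_{s,t}=f_s=f_u\circ\phi_{s,u}=f_t\circ\phi_{u,t}\circ\phi_{s,u},
\]
so they are two liftings of $f_s$ through the covering $f_t$ that coincide at $0$. Uniqueness of liftings through a covering map then yields $\phi_{s,t}=\phi_{u,t}\circ\phi_{s,u}$.

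The only subtle point is property (2): one has to be careful that $\phi_{s,s}$ is a priori only a holomorphic self-map of $B$ rather than an automorphism, but this is exactly the situation handled by Cartan's uniqueness theorem once (1) has been established. The rest is bookkeeping with the functorial lifting property of covering maps, and together the three assertions confirm that $(\phi_{s,t})_{0\leq s\leq t}$ fulfills (EF1)--(EF3) and is thus an evolution family.
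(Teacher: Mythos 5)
Your proof is correct and follows essentially the same route as the paper: chain rule at the origin for (1), and uniqueness of liftings through the covering $f_t$ for (3). The only cosmetic difference is in (2), where the paper deduces $\phi_{s,s}=\id_B$ directly from uniqueness of the lifting (the argument you relegate to a parenthesis), while you lead with Cartan's uniqueness theorem; both are valid one-line arguments.
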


\proof
1) The functions $f_t$, $f_s$ and $\phi_{s,t}$ fix the origin, then using the chain rule in the expression $f_s=f_t\circ \phi_{s,t}$ we obtain the statement.

2) It follows trivially from the uniqueness of the lifting.

3) By construction we have that for $0\leq s\leq u\leq t$ 
$$f_s=f_u\circ \phi_{s,u} \text{ \ \  and \ \  } f_u=f_t\circ \phi_{u,t}$$
and then
$$f_s=f_t\circ \phi_{u,t}\circ\phi_{s,u}.$$
Furthermore $f_s=f_t\circ \phi_{s,t}$, indeed for the uniqueness of the lifting we obtain the thesis.

\[
\begin{tikzcd}[column sep=3pc,row sep=3pc]
\textrm{$B$}  \arrow[rr, bend left=35, "\phi_{s,t}"] \arrow[r,rightarrow,"\phi_{s,u}"] \arrow[d,rightarrow,"f_s"] & \textrm{$B$} \arrow[r,rightarrow,"\phi_{u,t}"] \arrow[d,rightarrow,"f_u"] & \textrm{$B$} \arrow[d,rightarrow,"f_t"] \\
\textrm{$\Omega_s$} \arrow[r,rightarrow,"i_{s,u}"] & \textrm{$\Omega_u$} \arrow[r,rightarrow,"i_{u,t}"] & \textrm{$\Omega_t$}
\end{tikzcd}
\]
\endproof

Recalling the results show in the Section 3, the functions $\phi_{s,t}$ are univalent and locally uniformly Lipschitz.
\\

\begin{lemma}\label{teclem}
Let $(f_t)_{t\geq0}$ be a Loewner chain of covering mappings and $(\phi_{s,t})_{0\leq s \leq t}$ the associated evolution family. Let $\gamma:[0,1]\longrightarrow\Omega_t$ be a path with $\gamma(0)=0$ and $\tilde{\gamma}$ its lifting with respect to $f_t$ such that $\tilde{\gamma}(0)=0$. If $\gamma([0,1])\subseteq\Omega_s$ for some $s\in[0,t]$, then $\tilde{\gamma}([0,1])\subseteq\phi_{s,t}(B)$.
\end{lemma}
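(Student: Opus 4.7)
The plan is to exploit the relation $f_s = f_t \circ \phi_{s,t}$ from Proposition \ref{efcov} together with the uniqueness of path lifting for covering mappings. The point is that we have two covering maps available on the path $\gamma$: we can lift it via $f_t$ (which is the lift $\tilde{\gamma}$ we are given), or we can lift it via $f_s$ (which is legitimate since $\gamma$ lies in $\Omega_s$), and then push the latter forward through $\phi_{s,t}$ to get a candidate for $\tilde{\gamma}$ whose image is manifestly in $\phi_{s,t}(B)$.

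Concretely, I would proceed in three short steps. First, because $\gamma([0,1])\subseteq\Omega_s=f_s(B)$ and $\gamma(0)=0=f_s(0)$, the covering property of $f_s$ yields a unique path $\gamma_s:[0,1]\to B$ with $\gamma_s(0)=0$ and $f_s\circ\gamma_s=\gamma$. Second, define $\eta:=\phi_{s,t}\circ\gamma_s:[0,1]\to B$. Then $\eta(0)=\phi_{s,t}(0)=0$ by Proposition \ref{efcov}(2)... wait, $\phi_{s,t}(0)=0$ follows from the construction $f_s=f_t\circ\phi_{s,t}$ together with the fact that $f_s(0)=f_t(0)=0$ and our choice of $\phi_{s,t}$ fixing the origin. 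Moreover
\[
f_t\circ\eta = f_t\circ\phi_{s,t}\circ\gamma_s = f_s\circ\gamma_s = \gamma,
\]
so $\eta$ is also a lift of $\gamma$ via $f_t$ starting at $0$. Third, by the uniqueness of path lifting through the covering $f_t$, we conclude $\tilde{\gamma}=\eta=\phi_{s,t}\circ\gamma_s$, and since the image of $\gamma_s$ is contained in $B$, we get $\tilde{\gamma}([0,1])\subseteq\phi_{s,t}(B)$, as required.

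I do not expect any genuine obstacle here: the argument is purely topological and reduces entirely to the uniqueness clause in the definition of a covering mapping recalled at the start of Section 3. The only thing to be mildly careful about is not confusing the two liftings (one via $f_s$, one via $f_t$) and to record explicitly that $\phi_{s,t}$ fixes $0$ so that both candidate lifts through $f_t$ start at the same basepoint, which is exactly the input needed to invoke uniqueness.
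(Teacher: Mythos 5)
Your argument is correct and is essentially identical to the paper's proof: both lift $\gamma$ through $f_s$ to a path $\sigma$ starting at $0$, observe that $\phi_{s,t}\circ\sigma$ is a lift of $\gamma$ through $f_t$ with the same initial point as $\tilde{\gamma}$, and conclude $\tilde{\gamma}=\phi_{s,t}\circ\sigma$ by uniqueness of lifting. No gaps.
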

\proof
By assumptions $\gamma([0,1])\subseteq\Omega_s$, then let $\sigma$ be the unique lifting of $\gamma$ with respect to $f_s$ such that $\sigma(0)=0$. Now consider the two paths $\tilde{\gamma}$ and $\phi_{s,t}\circ\sigma$: we have
$$f_t\circ(\phi_{s,t}\circ\sigma)=f_s\circ\sigma=\gamma=f_t\circ\tilde{\gamma}$$
and they have the same initial point, then by the uniqueness of the lifting $\tilde{\gamma}=\phi_{s,t}\circ\sigma$, that implies $\tilde{\gamma}([0,1])\subseteq\phi_{s,t}(B)$.
\endproof

At this point, it is natural to study the evolution of fundamental group of the images of the chain.

\begin{proposition}\label{p1iso}
Let $(f_t)_{t\geq0}$ be a Loewner chain of covering mappings, then for each $0\leq s \leq t $ the morphisms
$$i^*_{s,t}:\pi_1(\Omega_s)\longrightarrow \pi_1(\Omega_t) $$
and
$$i^*_{t,\infty}:\pi_1(\Omega_t)\longrightarrow \pi_1(R(f_t))$$
are injective morphisms of groups.
\end{proposition}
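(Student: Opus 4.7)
The plan is to establish both injectivity statements by a lifting argument modelled on the proof of Lemma \ref{homeolift}, but exploited in the opposite direction: here the map $\phi_{s,t}$ plays the role of the lift and is already known to be univalent (in particular injective), by the discussion immediately following Proposition \ref{efcov}.

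For the first claim, I would fix $0\leq s\leq t$ and take a loop $\gamma\colon[0,1]\to\Omega_s$ based at $0$ whose image $i_{s,t}\circ\gamma$ is null-homotopic in $\Omega_t$; the goal is to conclude that $\gamma$ is already null-homotopic in $\Omega_s$. Lifting $\gamma$ along the universal covering $f_s$ produces $\tilde{\gamma}\colon[0,1]\to B$ with $\tilde{\gamma}(0)=0$. The relation $f_s=f_t\circ\phi_{s,t}$ makes $\phi_{s,t}\circ\tilde{\gamma}$ the unique $f_t$-lift of $i_{s,t}\circ\gamma$ starting at $0$. Since $f_t\colon B\twoheadrightarrow\Omega_t$ is a universal covering (as $B$ is simply connected) and $i_{s,t}\circ\gamma$ is null-homotopic in $\Omega_t$, the monodromy principle forces this lift to be a closed loop, so $\phi_{s,t}(\tilde{\gamma}(0))=\phi_{s,t}(\tilde{\gamma}(1))$. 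Univalence of $\phi_{s,t}$ then yields $\tilde{\gamma}(0)=\tilde{\gamma}(1)$, i.e.\ $\tilde{\gamma}$ is a closed loop in the simply connected ball $B$, whence $\gamma=f_s\circ\tilde{\gamma}$ is null-homotopic in $\Omega_s$.

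For the second claim I would use a compactness reduction together with the exhaustion $R(f_t)=\bigcup_{u\geq 0}\Omega_u$. Given a loop $\gamma\subseteq\Omega_t$ that is null-homotopic in $R(f_t)$ via some continuous $H\colon[0,1]^2\to R(f_t)$, the image $H([0,1]^2)$ is compact and is covered by the increasing open family $\{\Omega_u\}_{u\geq 0}$; passing to a finite subcover and using monotonicity yields $T\geq t$ with $H([0,1]^2)\subseteq\Omega_T$. Therefore $\gamma$ is null-homotopic in $\Omega_T$, and applying the first part to the pair $(t,T)$ gives that $\gamma$ is already null-homotopic in $\Omega_t$.

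I do not anticipate a genuine obstacle: the only non-formal input is the univalence of $\phi_{s,t}$, which is already in hand. The point that deserves care is that $R(f_t)$ need not be simply connected (as Example \ref{annuex} shows, where $R(f_t)=\C\setminus\{-1\}$), so the assertion about $i^*_{t,\infty}$ cannot be reduced to a universal-covering statement on $R(f_t)$ itself; the correct reduction is precisely the compactness of the null-homotopy combined with the first part.
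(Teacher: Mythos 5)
Your proof is correct, but for the first injectivity statement it takes a genuinely different route from the paper. The paper lifts the loop $\gamma$ along $f_t$, invokes Lemma \ref{teclem} to show that this lift lies in the simply connected set $\phi_{s,t}(B)$, contracts it there, and pushes the resulting homotopy down via $f_t$ into $f_t(\phi_{s,t}(B))=\Omega_s$. You instead lift along $f_s$, identify $\phi_{s,t}\circ\tilde{\gamma}$ as the $f_t$-lift, and use the monodromy criterion together with the univalence of $\phi_{s,t}$ to conclude that the $f_s$-lift closes up; in effect you prove the converse of Lemma \ref{homeolift} (an injective lift of the inclusion forces $i^*$ to be injective) and bypass Lemma \ref{teclem} entirely. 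Both arguments ultimately rest on the univalence of $\phi_{s,t}$, which is legitimately available from the evolution-family results quoted after Proposition \ref{efcov}, so there is no circularity; your version is slightly more economical, while the paper's version reuses Lemma \ref{teclem}, which it needs elsewhere anyway. For the second statement your argument matches the paper's, and is in fact stated more carefully: the paper only remarks that the closed path is contained in some $\Omega_T$, whereas the relevant point, which you make explicit, is that the compact image of the entire null-homotopy is contained in some $\Omega_T$ by the increasing open exhaustion, after which the first part applies to the pair $(t,T)$.
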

\proof
We start with the case $0\leq s \leq t<\infty$.

We prove that the kernel of $i^*_{s,t}$ is trivial: let $\gamma:[0,1]\rightarrow \Omega_s$ be a closed path that is homotopic to the constant one in $\Omega_t$, we have to prove this is also true in $\Omega_s$. Now $\gamma$ can be lifted to a path $\tilde{\gamma}$ in $B$ such that $f_t\circ \tilde{\gamma}=\gamma$ and $\tilde{\gamma}(0)=\tilde{\gamma}(1)=0$. By Lemma \ref{teclem} $\tilde{\gamma}([0,1])\subseteq\phi_{s,t}(B)$ and so there exists an homotopy $\tilde{G}:[0,1]\times[0,1]\rightarrow \phi_{s,t}(B)$ such that $\tilde{G}(\cdot,0)=\tilde{\gamma}$ and $\tilde{G}(\cdot,1)\equiv0$. Now $G:=f_t\circ\tilde{G}$ is an homotopy in $\Omega_s$ between $\gamma$ and the constant path.

Finally, in the second case, every closed path in $R(f_t)$ is contained in $\Omega_T$ for $T$ big enough, then we can use the homotopy constructed before. 
\endproof

Now we can give an analogous of Proposition \ref{p1} for Loewner chains of covering mappings.

\begin{proposition}\label{fact}
If $(f_t)_{t\geq0}$ is a Loewner chain of covering mappings, then $R(f_t)$ is covered by $\Cn$ and there exist an unique normalized covering mapping $\Psi:\Cn \longrightarrow R(f_t)\subseteq\Cn$ and normal Loewner chain $(g_t)_{t\geq0}$ such that for each $t\geq0$
\begin{equation}\label{facteq}
f_t=\Psi \circ g_t.
\end{equation}
\end{proposition}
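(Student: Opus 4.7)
The plan is to reduce to the classical Loewner theory via the evolution family and then package everything into a single map $\Psi$ defined piecewise by $f_t \circ g_t^{-1}$. By Proposition \ref{efcov} the family $(\phi_{s,t})_{0\leq s\leq t}$ associated to $(f_t)$ is an evolution family in the classical sense, so Proposition \ref{p2} supplies a unique normal Loewner chain $(g_t)_{t\geq 0}$ with $\phi_{s,t}=g_t^{-1}\circ g_s$. Since $(g_t)$ is already normal, applying Proposition \ref{p1} to it together with its uniqueness forces $R(g_t)=\Cn$; in particular each $g_t:B\to g_t(B)$ is a biholomorphism onto an open subset of $\Cn$ and $\bigcup_{t\geq 0} g_t(B)=\Cn$.

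Next I define $\Psi:\Cn\to R(f_t)$ by $\Psi(z):=f_t\bigl(g_t^{-1}(z)\bigr)$ whenever $z\in g_t(B)$. This is independent of $t$: for $s\leq t$ and $z\in g_s(B)\subseteq g_t(B)$, using $\phi_{s,t}=g_t^{-1}\circ g_s$ and $f_s=f_t\circ\phi_{s,t}$,
\[
f_t\circ g_t^{-1}=f_t\circ\phi_{s,t}\circ g_s^{-1}=f_s\circ g_s^{-1}.
\]
Hence $\Psi$ is a well-defined holomorphic map, surjective onto $R(f_t)$, and a local biholomorphism. The factorization $f_t=\Psi\circ g_t$ and the normalizations $\Psi(0)=0$, $d\Psi_0=e^t\id\circ e^{-t}\id=\id$ are then immediate.

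The heart of the matter, and the step I expect to be the main obstacle, is checking that $\Psi$ is a covering in the sense of Section 3, i.e.\ that every path lifts uniquely through $\Psi$. Given a path $\gamma:[0,1]\to R(f_t)$ and a preimage $y_0\in\Psi^{-1}(\gamma(0))$, compactness of $\gamma([0,1])$ and the exhaustions $R(f_t)=\bigcup_t\Omega_t$ and $\Cn=\bigcup_t g_t(B)$ let me pick $T$ large enough that $\gamma([0,1])\subseteq\Omega_T$ and $y_0\in g_T(B)$. Since $f_T:B\twoheadrightarrow\Omega_T$ is itself a covering, $\gamma$ lifts uniquely to a path $\sigma$ in $B$ with $\sigma(0)=g_T^{-1}(y_0)$, and then $\tilde\gamma:=g_T\circ\sigma$ is a path in $g_T(B)\subseteq\Cn$ satisfying $\Psi\circ\tilde\gamma=f_T\circ\sigma=\gamma$ and $\tilde\gamma(0)=y_0$. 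Uniqueness of $\tilde\gamma$ follows from $\Psi$ being a local homeomorphism by the standard open-and-closed argument on the set where two lifts agree.

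Finally, for uniqueness of the pair $(\Psi,(g_t))$: any other decomposition $(\Psi',(g_t'))$ of the same Loewner chain yields the same evolution family, $g_t^{-1}\circ g_s=\phi_{s,t}=(g_t')^{-1}\circ g_s'$, so Proposition \ref{p2} gives $g_t=g_t'$ for all $t$, and then $\Psi=f_t\circ g_t^{-1}=\Psi'$ on each $g_t(B)$ and hence on all of $\Cn$.
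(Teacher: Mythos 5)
Your argument is essentially identical to the paper's: both obtain $(g_t)_{t\geq0}$ from Proposition \ref{p2}, define $\Psi=f_t\circ g_t^{-1}$ piecewise on $g_t(B)$, check independence of $t$ via $f_s=f_t\circ\phi_{s,t}$, and establish the covering property by lifting a given path through $f_T$ for $T$ large and pushing forward by $g_T$. The only cosmetic differences are that the paper takes $R(g_t)=\Cn$ as part of the classical theory of normal chains rather than extracting it from the uniqueness in Proposition \ref{p1} (your phrasing there is a touch circular, but the fact is standard), and that you spell out the uniqueness of the pair $(\Psi,(g_t)_{t\geq0})$, which the paper leaves implicit.
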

\proof
Let $(\phi_{s,t})_{0\leq s \leq t}$ be the evolution family associated to $(f_t)_{t\geq0}$. By Proposition \ref{p2}, there exists a normal Loewner chain $(g_t)_{t\geq0}$ such that $\phi_{s,t}:=g^{-1}_t\circ g_s$ and $R(g_t)=\Cn$, then for each $z\in\Cn$ exists $t\geq0$ such that $z\in g_t(B)$, therefore we can define 
$$\Psi(z):=(f_t\circ g^{-1}_t)(z)$$
and thanks to the relation $f_s=f_t\circ \phi_{s,t}$ the definition does not depend on the choice of $t$.
Then for each $s\geq0$ and $z\in B$ we obtain 
\[f_s(z)=\Psi(g_s(z)).\] 
It remains to prove that $\Psi$ is a covering mapping. For each path $\gamma$ in $R(f_t)$ and $w\in\Psi^{-1}(\gamma(0))$ there exists $t\geq0$ such that $\gamma([0,1])\subseteq f_t(B)$ and $w\in g_t(B)$: the lifting of $\gamma$ with respect to $\Psi$ with initial point $w$ is $g_t\circ\tilde{\gamma}$, where $\tilde{\gamma}$ is the lifting of $\gamma$ with respect to $f_t$ and initial point $g_t^{-1}(w)$. Finally $\Psi$ is a local biholomorphism with the lifting property, then it is a covering mapping.
\endproof  

We want to study the property of the Loewner Range.

\begin{proposition}\label{neccond}
Let $(f_t)_{t\geq0}$ be a Loewner chain of covering mappings and $R(f_t)$ its Loewner range, then 
	
$\bullet$ $R(f_t)$ is pseudoconvex and covered by $\Cn$;
	
$\bullet$ the couple $(f_t(B),R(f_t))$ is Runge for each $t\geq0$;
	
$\bullet$ the morphism of groups $i_t^*:\pi_1(f_t(B))\longrightarrow \pi_1(R(f_t))$ is injective for each $t\geq0$.\\
\end{proposition}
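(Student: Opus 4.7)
The proposition bundles three claims; I would address them in order of increasing difficulty, beginning with the two that are essentially corollaries of earlier results.

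For the first bullet, $R(f_t)=\bigcup_{t\geq 0}\Omega_t$ is an increasing union of pseudoconvex domains in $\Cn$, hence pseudoconvex by Behnke--Stein; and that $R(f_t)$ is covered by $\Cn$ is exactly the content of Proposition \ref{fact}, which constructs the covering $\Psi:\Cn\twoheadrightarrow R(f_t)$. The third bullet is the second injectivity statement of Proposition \ref{p1iso}, applied to the inclusion $\Omega_t\hookrightarrow R(f_t)$.

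The middle bullet, Rungeness of the pair $(f_t(B),R(f_t))$, is where the real work lies. My plan is to invoke the factorization $f_t=\Psi\circ g_t$ of Proposition \ref{fact}, where $(g_t)_{t\geq 0}$ is a normal univalent Loewner chain with $R(g_t)=\Cn$. Classical Loewner theory, recalled in Section 2, ensures that $(g_t(B),\Cn)$ is a Runge pair, and the strategy is to push this Rungeness down through $\Psi$. Using Lemma \ref{homeolift} together with the $\pi_1$-injectivity of Proposition \ref{p1iso}, one identifies $g_t(B)$ as a connected component of $\Psi^{-1}(f_t(B))$ and $\Psi|_{g_t(B)}:g_t(B)\twoheadrightarrow f_t(B)$ as the universal cover. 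Via the natural identifications $\O(R(f_t))\cong\O(\Cn)^{DT(\Psi)}$ and $\O(f_t(B))\cong\O(g_t(B))^{G_0}$, where $G_0\subset DT(\Psi)$ is the stabilizer of $g_t(B)$, the Runge assertion reduces to approximating a $G_0$-invariant holomorphic function on $g_t(B)$, uniformly on compacta, by restrictions of $DT(\Psi)$-invariant entire functions on $\Cn$.

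The hard step is precisely this invariant approximation: classical Runge produces an entire approximant, but not a $DT(\Psi)$-invariant one, and $DT(\Psi)$ is typically infinite, so naive symmetrization cannot converge. I would therefore attack it through holomorphic hulls: for $K\subset f_t(B)$ compact, I would show that the $\O(R(f_t))$-hull of $K$ is captured by the $DT(\Psi)$-invariant hull of $\Psi^{-1}(K)$ in $\Cn$. The classical Runge property of $g_t(B)$ in $\Cn$ confines the hull of $\Psi^{-1}(K)\cap g_t(B)$ to $g_t(B)$, and $DT(\Psi)$-equivariance then forces the invariant hull to stay inside $\Psi^{-1}(f_t(B))$, whose image under $\Psi$ is $f_t(B)$. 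Implementing this cleanly---so that no extra sheets leak outside $\Psi^{-1}(f_t(B))$ when taking the invariant hull---is the step I expect to be the principal technical obstacle.
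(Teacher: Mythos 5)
Your handling of the first and third bullets is correct and essentially the paper's: pseudoconvexity of the increasing union (Behnke--Stein) together with Proposition \ref{fact} for the covering by $\Cn$, and Proposition \ref{p1iso} for the injectivity of $i_t^*$. The gap is in the second bullet. The paper disposes of it in one line: $\{\Omega_t\}_{t\geq0}$ is a continuously increasing family of \emph{pseudoconvex} domains (pseudoconvexity of each $\Omega_t$ is built into the definition of a Loewner chain of covering mappings, and the continuity/kernel convergence of the family is what Proposition \ref{CKcov} records), so the Docquier--Grauert theorem \cite{DG} gives directly that $(\Omega_t,R(f_t))$ is a Runge pair. No factorization through the universal cover is needed for this point.

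Your proposed route instead tries to push Rungeness \emph{down} the covering $\Psi:\Cn\twoheadrightarrow R(f_t)$, and this is where the argument breaks. Functions in $\O(R(f_t))$ correspond to $DT(\Psi)$-invariant entire functions, so the hull of a compact $K\subseteq f_t(B)$ relative to $\O(R(f_t))$ is taken with respect to a much smaller algebra than $\O(\Cn)$; the classical Rungeness of $g_t(B)$ in $\Cn$ bounds $\hat{L}_{\O(\Cn)}$ for compacts $L\subseteq g_t(B)$, but gives no upper bound on the invariant hull, which can only be larger. Moreover $\Psi^{-1}(K)$ is non-compact whenever $DT(\Psi)$ is infinite, so ``the $DT(\Psi)$-invariant hull of $\Psi^{-1}(K)$'' is not the hull of a compact set and the Oka--Weil machinery does not apply to it. You correctly flag this descent as the principal obstacle, but it is not a technicality to be cleaned up afterwards: it is the entire content of the claim, and nothing in the outline indicates how to close it. Note also that Theorem \ref{genapcov} in the paper establishes the \emph{opposite} implication (Rungeness of $(\Omega,\Lambda)$ downstairs implies Rungeness of $g(B)$ upstairs, via Oka--Weil and the connectedness of the components of the polynomial hull); that is the direction that works, and it cannot simply be run in reverse.
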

\proof
The first and latter conditions descend respectively from Proposition \ref{fact} and Proposition \ref{p1iso}, while the second one follows from Docquier-Grauert Theorem \cite{DG}.
\endproof

\begin{theorem}\label{genapcov}
For $n\geq2$, let $f:B\twoheadrightarrow\Omega\subseteq\Cn$ be a covering mapping. If there exists a pseudoconvex neighborhood $\Lambda$ of $\Omega$ such that 
	
$\bullet$ $\Lambda$ is holomorphically covered by $\Cn$;
	
$\bullet$ the couple $(\Omega,\Lambda)$ is Runge;
	
$\bullet$ the morphism $i^*:\pi_1(\Omega)\longrightarrow\pi_1(\Lambda)$ is an injective morphism of groups;\\
then $f$ can be approximate uniformly on compacta of $B$ by covering mappings of $\Cn$.
\end{theorem}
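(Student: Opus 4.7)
My plan is to reduce the problem to the univalent case, and then apply Theorem \ref{AL}, by lifting $f$ through the covering $\Psi:\Cn\twoheadrightarrow\Lambda$ granted by the first hypothesis. Since $B$ is simply connected, the lifting property gives a holomorphic $g:B\to\Cn$ with $\Psi\circ g=f$. Applying Lemma \ref{homeolift} with $Y=\Lambda$, $X=\Omega$, $\tilde{Y}=\Cn$, $\tilde{X}=B$ and using the injectivity of $i^*:\pi_1(\Omega)\to\pi_1(\Lambda)$, the lift $g$ is injective, so $g:B\to C$ is a biholomorphism, where $C:=g(B)$. Since $B$ is simply connected and $\ker i^*=\{e\}$, covering theory further identifies $C$ with a whole connected component of $\Psi^{-1}(\Omega)$ and $\Psi|_C$ with the universal covering of $\Omega$.

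The key technical step is to prove that the pair $(C,\Cn)$ is Runge. Given a compact $K\subset C$, I would enlarge it to a compact connected $K'\subset C$, so that $\widehat{K}_{\Cn}\subseteq\widehat{K'}_{\Cn}$ (here $\widehat{\cdot}_{\Cn}$ denotes the polynomial hull). For any $p\in\widehat{K'}_{\Cn}$ and any $F\in\O(\Lambda)$, the entire function $F\circ\Psi$ satisfies
\[
|F(\Psi(p))|\le\max_{K'}|F\circ\Psi|=\max_{\Psi(K')}|F|,
\]
so $\Psi(p)\in\widehat{\Psi(K')}_{\Lambda}$. By the Runge hypothesis on $(\Omega,\Lambda)$ and Docquier--Grauert, $\widehat{\Psi(K')}_{\Lambda}\subseteq\Omega$, whence $p\in\Psi^{-1}(\Omega)$. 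Now I would invoke the classical fact that the polynomial hull of a connected compact subset of $\Cn$ is connected (provable via Oka--Weil: a disconnection $\widehat{K'}_{\Cn}=L_1\sqcup L_2$ would let us approximate the separating locally constant function by polynomials $p_n$, contradicting $\max_{K'}|p_n|=\max_{\widehat{K'}_{\Cn}}|p_n|$ together with the connectedness of $K'$). This forces $\widehat{K'}_{\Cn}$ to lie in the single component $C$ of $\Psi^{-1}(\Omega)$ that contains $K'$.

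With $(C,\Cn)$ Runge, I can apply Theorem \ref{AL} to the univalent map $g:B\to C\subseteq\Cn$ with ambient neighborhood $\Cn$ itself, obtaining a sequence $(\Psi_n)$ of automorphisms of $\Cn$ converging to $g$ uniformly on compacta of $B$. Setting $F_n:=\Psi\circ\Psi_n:\Cn\to\Lambda$, the composition of an automorphism of $\Cn$ with a covering is again a covering (the lifting of a path $\gamma$ in $\Lambda$ through $F_n$ is $\Psi_n^{-1}\circ\widetilde\gamma$, where $\widetilde\gamma$ is its lift through $\Psi$), so each $F_n$ is an entire holomorphic covering mapping of $\Lambda$. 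Finally, continuity of $\Psi$ gives $F_n|_K=\Psi\circ\Psi_n|_K\longrightarrow\Psi\circ g=f$ uniformly on every compact $K\subset B$, concluding the proof.

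The main obstacle I expect is the Runge step: transferring the Runge property from $(\Omega,\Lambda)$ to $(C,\Cn)$ through the covering $\Psi$ critically requires the connectedness of polynomial hulls of connected compacta, a non-trivial classical fact ultimately resting on the Oka--Weil approximation theorem.
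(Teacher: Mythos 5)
Your proposal is correct and follows essentially the same route as the paper: lift $f$ through the covering $\Psi:\Cn\twoheadrightarrow\Lambda$, get univalence of the lift from Lemma \ref{homeolift}, prove that $g(B)$ is Runge by pushing the polynomial hull into $\Psi^{-1}(\Omega)$ via the Runge pair $(\Omega,\Lambda)$, and conclude with Andérsen--Lempert and composition with $\Psi$. The only (interchangeable) differences are in the connectivity step: you identify $g(B)$ as a full connected component of $\Psi^{-1}(\Omega)$ by covering-space theory and use that the polynomial hull of a connected compact set is connected, whereas the paper proves via an explicit deck-transformation claim that $\Psi^{-1}(\Omega)$ is a disjoint union of translates of $g(B)$ and then uses that every component of $\widehat{K}$ meets $K$ --- both reductions rest on the same Oka--Weil separation argument.
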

\proof
Without loss of generality, we can assume that $f(0)=0$. Let $\Psi:\Cn\twoheadrightarrow\Lambda$ be a covering mapping of $\Lambda$ with $\Psi(0)=0$, and $g$ be the lifting of the inclusion of $\Omega$ into $\Lambda$ with respect to $\Psi$ such that $g(0)=0$, then by Lemma \ref{homeolift} $g$ is univalent.
\[
\begin{tikzcd}[column sep=3pc,row sep=3pc]
\textrm{$B$} \arrow[r,rightarrow,"g"] \arrow[d,rightarrow,"f"] & \textrm{$\Cn$} \arrow[d,rightarrow,"\Psi"] \\
\textrm{$\Omega$} \arrow[r,rightarrow,"i"] & \textrm{$\Lambda$}
\end{tikzcd}
\]
We want to prove that $g(B)$ is Runge. Let $K$ be a compact set of $g(B)$ and consider its polynomial convex hull $\hat{K}:=\hat{K}_{\O(\Cn)}$. Using the Runge-ness of $(\Omega,\Lambda)$ and the Oka-Weil theorem we have
$$\Psi(\hat{K})\subseteq\widehat{\Psi(K)}_{\O(\Lambda)}\subseteq\Omega$$
then
$$\hat{K}\subseteq\Psi^{-1}(\Omega)=\bigcup_{F\in DT(\Psi)} F(g(B)).$$
\textbf{Claim:} For each $F\in DT(\Psi)$, if $F(g(B))\cap g(B)\neq\emptyset$ then $F(g(B))=g(B)$.
\begin{proof}[Proof of Claim]
It is sufficient to show that if $F(g(B))\cap g(B)\neq\emptyset$ then  $F(g(B))\subseteq g(B)$ (replacing $F$ with $F^{-1}$ we obtain the inverse inclusion).
By contradiction assume that there exist $x,y\in g(B)$ such that $F(x)\in g(B)$ but $F(y)\notin g(B)$. Let $\gamma$ be a path in $g(B)$ from $x$ to $y$ and denote with $\tilde{\gamma}$ the lifting of $\Psi\circ\gamma$ with respect to $f$ such that $\tilde{\gamma}(0)=g^{-1}(F(x))$. Now consider the paths $g\circ\tilde{\gamma}$ and $F\circ\gamma$: we have that
$$\Psi\circ(g\circ\tilde{\gamma})=f\circ \tilde{\gamma}=\Psi\circ \gamma=\Psi \circ (F\circ \gamma) $$
and they have the same initial point, then by the uniqueness of the lifting they are the same path. Considering the final point, we obtain
$$F(y)=F(\gamma(1))=g(\tilde{\gamma}(1)) $$
then $F(y)\in g(B)$, and this is a contradiction.		
\end{proof}
Let consider the subgroup $H:=\{F\in DT(\Psi): F(g(B))=g(B)\}$ and $G:=DT(\Psi)/H$, then we have
$$\hat{K}\subseteq\Psi^{-1}(\Omega)=\bigsqcup_{F\in G} F(g(B)),$$
where the union is disjoint.

Now, recalling that every connected component of $\hat{K}$ intersects $K$ \cite[Cor. pag. 186]{Scmplx}, we obtain $\hat{K}\subseteq g(B)$, that implies that $g(B)$ is Runge.

Finally $g$ is an univalent mapping with $g(B)$ Runge, then by Andérsen-Lempert theorem it can be approximate by a sequence $\{\Phi_k\}_{k\in\N}$ of automorphism of $\Cn$, therefore the sequence $\{\Psi\circ\Phi_k\}_{k\in\N}$ is a sequence of covering mappings of $\Cn$ that approximate $f$ uniformly on compact sets of $B$.
\endproof

The proof of Theorem \ref{apprxcov} is now straightforward.

\begin{proof}[Proof of Theorem~\ref{apprxcov}]
It is sufficient to use Theorem \ref{genapcov} taking as $\Lambda$ the Loewner Range. The required assumptions descend from Proposition \ref{neccond}.
\end{proof}

We give an explicit application of the previous theorem.

\begin{example}\label{exball}
In \cite{dF} the covering mappings of $\Bn$ generated by one hyperbolic automorphism are studied in detail: for instance we have the "generalized annulus in higher dimension"
$$f(z_1,\dots,z_n)=\Biggl(\Bigl(\frac{1-iz_1}{1+iz_1}\Bigr)^{\frac{i}{2}}-1, \frac{z_2}{\sqrt{1+z_1^2}},\dots,\frac{z_n}{\sqrt{1+z_1^2}}\Biggr)$$
that is a covering mappings of $\Bn$ with image $$\Omega=\Bigl\{(z_1,\dots,z_n)\in\Cn: r^{-1}<|z_1+1|<r, \sum_{j=2}^{n}|z_j|^2<\sin(-2\log|z_1|)\Bigr\}$$
with $r:=e^{\frac{\pi}{4}}$. We want to prove that $f$ can be approximate by entire covering mappings, showing that $f$ embeds into a Loewner chain of covering mappings. Indeed we define for each $t\geq0$
$$f_t(z_1,\dots,z_n)=\Biggl(\Bigl(\frac{1-iz_1}{1+iz_1}\Bigr)^{\frac{i}{2}e^t}-1, e^t\frac{z_2}{\sqrt{1+z_1^2}},\dots,e^t\frac{z_n}{\sqrt{1+z_1^2}}\Biggr)$$
that is a covering mappings of $\Bn$ with image 
$$\Omega_t=\Bigl\{(z_1,\dots,z_n)\in\Cn: r_t^{-1}<|z_1+1|<r_t, \sum_{j=2}^{n}|z_j|^2<e^{2t}\sin(-2\log|z_1|)\Bigr\}$$
where $r_t=e^{\frac{\pi}{4}e^t}$.
The family $(f_t)_{t\geq0}$ is a Loewner chain of covering mappings with range $\C\backslash\{-1\}\times\C^{n-1}$, and obviously $f$ embeds into $(f_t)_{t\geq0}$, then by Theorem \ref{apprxcov} $f$ can be approximate uniformly on compacta of $\Bn$ by entire covering mappings (similar argument can be used for each covering mappings studied in \cite{dF}).
\end{example}

\section{Other properties of Loewner chains of covering mappings}

In this section we collect other interesting properties of Loewner chains of covering mappings.

It is useful to recall that if $(\phi_{s,t})_{0\leq s\leq t}$ is evolution family, then for Caratheodory Kernel Convergence (see Theorem 3.5 in \cite{ABHK}) we have that for each $0\leq s\leq t$
$$\phi_{s,t}(B)=\bigcup_{u<s} \phi_{u,t}(B) \ \ \text{and}\ \ \phi_{s,t}(B)=int\Bigl(\bigcap_{u>s}\phi_{u,t}(B)\Bigr)_0$$
where with $int(\cdot)_0$ we denote the connected component of the interior part that contains the origin.

\begin{proposition}[Caratheodory Kernel Conv. for Loewner chains of covering maps]\label{CKcov}\mbox{}
\\
Let $(f_t)_{t\geq0}$ be a Loewner chain of covering mappings and denote $\Omega_t:=f_t(B)$. Then for each $t>0$ we have
\begin{equation}\label{CKcond}
\Omega_t=\bigcup_{s<t}\Omega_s\ \ \ \text{and}\ \ \ \Omega_t=int\Bigl(\bigcap_{u>t}\Omega_u\Bigr)_0.
\end{equation}
Conversely, let $\{\Omega_t\}_{t\geq0}$ be a increasing family of domains in $\Cn$ covered by $B$ such that relation (\ref{CKcond}) holds and $\bigcup_{t\geq0}\Omega_t$ is not covered by $B$. Suppose that there exists $f_t$ a covering mappings from $B$ to $\Omega_t$ such that $f_t(0)=0$ and $(df_t)_0=\alpha_t\id$ with $\alpha_t>0$ for each $t\geq0$. Then there exists a strictly increasing continuous function $\beta:\R_{\geq0}\longrightarrow\R_{\geq0}$ with $\beta(0)=0$ and $\lim_{t\rightarrow\infty}\beta(t)=\infty$ such that $(\alpha_0^{-1}f_{\beta(t)})_{t\geq0}$ is a Loewner chain of covering mappings.
\end{proposition}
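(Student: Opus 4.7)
My plan splits naturally along the two assertions of the proposition.

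For the forward direction I would invoke the factorization of Proposition~\ref{fact} to write $f_t=\Psi\circ g_t$, where $\Psi:\Cn\twoheadrightarrow R(f_t)$ is the canonical covering and $(g_t)_{t\geq 0}$ is the associated normal univalent Loewner chain, so that $\Omega_t=\Psi(g_t(B))$. The classical Caratheodory kernel relations for univalent Loewner chains, recalled at the start of this section, give $g_t(B)=\bigcup_{s<t}g_s(B)$ and $g_t(B)=\operatorname{int}\bigl(\bigcap_{u>t}g_u(B)\bigr)_0$. Pushing the first identity forward through $\Psi$ immediately yields $\Omega_t=\bigcup_{s<t}\Omega_s$. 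For the second, writing $W:=\operatorname{int}\bigl(\bigcap_{u>t}\Omega_u\bigr)_0$, I would consider the connected component $\widetilde W$ of $\Psi^{-1}(W)$ containing $0\in\Cn$. Since $W$ is open connected and $\Psi$ is a covering, the restriction $\Psi|_{\widetilde W}:\widetilde W\to W$ is still a covering, in particular surjective. For each $u>t$, $\widetilde W\subseteq\Psi^{-1}(\Omega_u)=\bigsqcup_{F\in DT(\Psi)}F(g_u(B))$; being connected and containing $0\in g_u(B)$, $\widetilde W\subseteq g_u(B)$. Hence $\widetilde W\subseteq\bigcap_{u>t}g_u(B)$, and the classical CK identity places this open connected set inside $g_t(B)$. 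Applying $\Psi$, one gets $W=\Psi(\widetilde W)\subseteq\Omega_t$; the reverse inclusion $\Omega_t\subseteq W$ is immediate from monotonicity.

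For the converse direction I would begin, as in Proposition~\ref{efcov}, by lifting the inclusions $\Omega_s\hookrightarrow\Omega_t$ through $f_t$ to obtain a family $\phi_{s,t}:B\to B$ with $\phi_{s,t}(0)=0$, satisfying properties EF2 and EF3, and with $(d\phi_{s,t})_0=(\alpha_s/\alpha_t)\id$ by the chain rule. Schwarz's lemma yields $\alpha_s\leq\alpha_t$ for $s\leq t$, while Cartan's uniqueness theorem (as invoked in Remark~\ref{covuni}) implies that equality can occur only when $\Omega_s=\Omega_t$; together with the non-degeneracy forced by the hypothesis on $\bigcup_t\Omega_t$, this yields strict monotonicity of $\alpha_t$. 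The general theory of evolution families, applied to the (suitably rescaled) $\phi_{s,t}$, further ensures that the $\phi_{s,t}$ are univalent self-maps of $B$ and that the family is locally uniformly Lipschitz.

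The crux is continuity of $t\mapsto\alpha_t$ together with $\alpha_t\to\infty$. For left continuity at $t_0$ I would combine the first CK relation with Lemma~\ref{teclem}: every point of $B$ lies on a path whose $f_{t_0}$-projection is compact in $\Omega_{t_0}=\bigcup_{s<t_0}\Omega_s$, hence contained in $\Omega_s$ for some $s<t_0$, so by the lemma sits inside $\phi_{s,t_0}(B)$; thus $\bigcup_{s<t_0}\phi_{s,t_0}(B)=B$. Regarded as a normal family of univalent self-maps of $B$ fixing $0$, the $\phi_{s,t_0}$ admit subsequential limits $\phi$ along $s_n\uparrow t_0$ which are univalent by Hurwitz (since $(d\phi)_0=c\,\id$ with $c=\lim\alpha_{s_n}/\alpha_{t_0}>0$) and whose images exhaust $B$; hence $\phi$ is an automorphism of $B$ fixing $0$, and since $(d\phi)_0$ must be simultaneously a positive scalar multiple of $\id$ and a linear isometry of the norm, $c=1$. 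Right continuity follows by an analogous argument based on the second CK relation. Divergence $\alpha_t\to\infty$ is obtained by contradiction: a uniform bound on $\alpha_t$ would, via Montel and the uniqueness in Remark~\ref{covuni}, produce a covering $B\twoheadrightarrow\bigcup_t\Omega_t$, contradicting the hypothesis.

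With $\alpha:[0,\infty)\to[\alpha_0,\infty)$ continuous, strictly increasing and divergent, setting $\beta(t):=\alpha^{-1}(\alpha_0 e^t)$ and $\tilde f_t:=\alpha_0^{-1}f_{\beta(t)}$ yields the desired family: $\beta$ is a continuous strictly increasing homeomorphism of $[0,\infty)$ with $\beta(0)=0$ and $\lim_{t\to\infty}\beta(t)=\infty$, the normalizations $\tilde f_t(0)=0$ and $(d\tilde f_t)_0=e^t\id$ hold by construction, and the covering, pseudoconvexity and nesting properties pass directly from $f_{\beta(t)}$ to $\tilde f_t$. The main obstacle is the continuity of $\alpha_t$: turning the purely geometric CK conditions on $(\Omega_t)$ into analytic continuity of the normalized covering constants requires both the lifting machinery of Lemma~\ref{teclem} and the Cartan/Schwarz rigidity identifying the subsequential limits of $\phi_{s,t_0}$ with the identity of $B$.
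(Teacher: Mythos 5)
Your proposal is correct, and for the converse half it runs essentially parallel to the paper's argument: the paper proves continuity of $t\mapsto f_t$ from above by extracting a normal limit $g$ of $f_{t_k}$, lifting it through $f_t$ and identifying the lift with the identity via Cartan's uniqueness theorem, while you prove continuity of $t\mapsto\alpha_t$ by showing that subsequential limits of $\phi_{s,t_0}$ are automorphisms of $B$ fixing the origin, hence the identity; both treatments of divergence extract, from a bounded $\alpha_t$, a covering $B\twoheadrightarrow\bigcup_t\Omega_t$ contradicting the hypothesis. Where you genuinely diverge is the forward direction. The paper stays inside $B$: it transports the Carath\'eodory kernel identities for the evolution family $(\phi_{s,t})$ through $f_t$, using Lemma \ref{teclem} to show that the component $V$ of $f_T^{-1}\bigl(int\bigl(\bigcap_{u>t}\Omega_u\bigr)_0\bigr)$ through the origin lies in every $\phi_{u,T}(B)$. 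You instead factor $f_t=\Psi\circ g_t$ via Proposition \ref{fact} and push the classical kernel identities for the normal chain $(g_t)$ down through the covering $\Psi$; this requires the orbit decomposition $\Psi^{-1}(\Omega_u)=\bigcup_{F\in DT(\Psi)}F(g_u(B))$ together with the dichotomy that each $F(g_u(B))$ is either equal to or disjoint from $g_u(B)$, which is precisely the Claim inside the proof of Theorem \ref{genapcov} (cf.\ Proposition \ref{invphi}), so you should cite or reprove it rather than assert the disjoint union outright. Your route buys a cleaner conceptual picture (the covering statement is the univalent one seen through a fixed $\Psi$), at the cost of invoking the global factorization; the paper's route is more elementary and self-contained. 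One shared soft spot, present in the paper as well: strict monotonicity of $\alpha_t$, needed to invert $\gamma(t)=\log[\alpha_t/\alpha_0]$, only follows if the family $\{\Omega_t\}$ is strictly increasing, which the kernel conditions alone do not force; neither your appeal to the hypothesis on $\bigcup_t\Omega_t$ nor the paper's bare assertion settles this point.
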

\proof
We divide the proof in two part

1) We have to prove (\ref{CKcond}): the first one easily descends from the Carathéodory Kernel convergence of the family $(\phi_{s,t})_{0\leq s\leq t}$
$$\Omega_t=f_t(B)=f_t\Bigl(\bigcup_{s<t}\phi_{s,t}(B)\Bigr)=\bigcup_{s<t}f_t(\phi_{s,t}(B))=\bigcup_{s<t}f_s(B)=\bigcup_{s<t}\Omega_s.$$
For the second one, let denote $\Omega:=int(\bigcap_{u>t}\Omega_u)_0$, obviously we have that $\Omega_t\subseteq \Omega$. Conversely, fix $T>t$ and let $V$ be the connected component of  $f^{-1}_T(\Omega)$ that contains 0 (notice that $f_T(V)=\Omega$). Now we want to prove that $V\subseteq\phi_{t,T}(B)$: fix $w\in V$ and let $\tilde{\gamma}$ be a path in $V$ that connects $0$ to $w$, and take $\gamma=f_T\circ\tilde{\gamma}$ that is a path in $\Omega$. Notice that $\tilde{\gamma}$ is the unique lifting of $\gamma$ with respect to $f_T$ with initial point $0$. Now for each $u\in(t,T]$ we have that $\gamma([0,1])\subseteq\Omega_u$, then by Lemma \ref{teclem}
$$w=\tilde{\gamma}(1)\in\tilde{\gamma}([0,1])\subseteq\phi_{u,T}(B)$$
then $V\subset\phi_{u,T}(B)$. Finally, using Carathedory Kernel Convergence for the evolution family
$$V\subset int\Bigl(\bigcap_{u>t}\phi_{u,T}(B)\Bigr)_0=\phi_{t,T}(B)$$
consequently
$$\Omega=f_T(V)=f_t(\phi^{-1}_{t,T}(V))\subseteq\Omega_t.$$

2) First of all, we prove the continuity from above and below of $f_t$. Let $\{t_k\}_{k\in\N}$ be a decreasing sequence that converge to $t$ and consider $T$ such that $T>t_k$ for each $k\in\N$, then for each $0<r<1$
$$\max_{z\in rB}|f_{t_k}(z)|=\max_{z\in rB}|f_T(\phi_{t_k,T}(z))|\leq\max_{z\in rB}|f_T(z)| $$
i.e. the sequence $\{f_{t_k}\}_{k\in\N}$ is uniformly bounded and then it is normal. Therefore there exists a subsequence $\{t_{k_{j}}\}_{j\in\N}$ such that $f_{t_{k_{j}}}$ converges to a function $g$. Using an argument similar to 1), $g$ has range $int(\bigcap_{j\in\N}\Omega_{t_{k_j}})_0=\Omega_t$. We have to prove that $g=f_t$: let $\Phi$ be the lifting of $g$ with respect to $f_t$ such that $\Phi(0)=0$.
\[
\begin{tikzcd}[column sep=3pc,row sep=3pc]
 & \textrm{$B$} \arrow[d,rightarrow,"f_t"] \\
\textrm{$B$} \arrow[ru,rightarrow,"\Phi"] \arrow[r,rightarrow,"g"] & \textrm{$\Omega_t$}
\end{tikzcd}
\]
Let denote $\alpha:=\lim_{j\rightarrow\infty}\alpha_{t_{k_j}}=\inf_{j}\alpha_{t_{k_j}}$. 
Using the chain rule in the expression $f_t\circ\Phi=g$ and noticing that $\alpha_t\leq\alpha$ we obtain
$$(df_t)_0 \cdot (d\Phi)_0=(dg)_0 \ \ \Rightarrow \ \ (d\Phi)_0=\frac{\alpha}{\alpha_t}\id\geq\id$$
that implies for Cartan uniqueness theorem that $\Phi=\id_B$ and then $g=f_t$. Finally, every subsequence of $f_{t_k}$ has the same limit $f_t$, then we have the continuity from above.

The proof of continuity from below is analogous to the argument that we present in the next paragraph, then we omit it.

For the construction of $\beta$, we first observe that the function $t\mapsto\alpha_t$ is strictly increasing, then we can define $\gamma(t):=\log[\alpha_t/\alpha_0]$: the desired function is $\beta:=\gamma^{-1}$. The only thing that we have to prove is that $\lim_{t\rightarrow\infty}\beta(t)=\infty$: by contradiction if $\lim_{t\rightarrow\infty}\beta(t)=:T<\infty$, we define for each $0\leq s\leq t<T$ the function $\phi_{s,t}$ as the unique lifting of the inclusion of $\Omega_{\beta(s)}$ into $\Omega_{\beta(t)}$ that fixes the origin. The family $(\phi_{s,t})_{0\leq s\leq t <T}$ satisfies properties 1)-3) of Proposition \ref{efcov}, hence every map $\phi_{s,t}$ is univalent. Now by normality, for each $0<s<T$ the sequence $\phi_{s,t}$ converges to a function $\phi_{s,T}$ for $t\rightarrow T$ (notice that they are univalent because $d(\phi_{s,T})_0=e^{s-T}\id$), and by the Caratheodory Kernel Convergence we have 
$$\bigcup_{s<T}\phi_{s,T}(B)=B.$$
Finally, we define $f_T$ in the following way: for each $z\in B$ consider a $s\in[0,T)$ such that $z\in\phi_{s,T}(B)$, then $f_T(z):=f_s(\phi^{-1}_{s,T}(z))$. It is easy to check that the definition does not depend from the choice of $s$ and that $f_t$ converges to $f_T$ for $t\rightarrow T$. Now  $f_T$ is a covering mapping from $B$ into $\bigcup_{t\geq0}\Omega_t$ (using a similar argument used for $\Psi$ in the proof of Proposition \ref{fact}), and this is a contradiction. 
\endproof 

Now we want to show some invariant properties of the evolution family associated to a Loewner chains of covering mappings.

\begin{proposition}\label{invphi}
Let $(f_t)_{t\geq0}$ be a Loewner chain of covering mappings and $(\phi_{s,t})_{0\leq s \leq t}$ the associated evolution family. Fix $0\leq s \leq t $ and  $F\in DT(f_t)$, then 
	
$\bullet$ $F(\phi_{s,t}(B))\cap \phi_{s,t}(B)=\emptyset$, or
	
$\bullet$ $F(\phi_{s,t}(B))=\phi_{s,t}(B)$, i.e. the domain $\phi_{s,t}(B)$ is $F$-invariant.
\end{proposition}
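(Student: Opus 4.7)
The plan is to follow the same idea used in the Claim inside the proof of Theorem~\ref{genapcov}, since the present statement is structurally identical: there $g:B\to\Cn$ was a univalent lifting of an inclusion through a covering $\Psi$, here $\phi_{s,t}:B\to B$ is the univalent lifting of the inclusion $i_{s,t}:\Omega_s\hookrightarrow\Omega_t$ through the covering $f_t$, and the deck group $DT(\Psi)$ is replaced by $DT(f_t)$. In particular, I would note the dictionary $\Psi\leftrightarrow f_t$, $g\leftrightarrow\phi_{s,t}$, $f\leftrightarrow f_s$, $\Lambda\leftrightarrow\Omega_t$, $\Omega\leftrightarrow\Omega_s$, $\Cn\leftrightarrow B$, and imitate that argument.

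First I would reduce to proving one inclusion: assuming $F(\phi_{s,t}(B))\cap\phi_{s,t}(B)\neq\emptyset$ it suffices to show $F(\phi_{s,t}(B))\subseteq\phi_{s,t}(B)$, as the reverse inclusion follows by replacing $F$ with $F^{-1}\in DT(f_t)$. For the contradiction step I would suppose instead there exist $x,y\in\phi_{s,t}(B)$ with $F(x)\in\phi_{s,t}(B)$ but $F(y)\notin\phi_{s,t}(B)$, and connect $x$ to $y$ by a path $\gamma$ in $\phi_{s,t}(B)$.

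The key step is the lifting argument. Since $\gamma$ takes values in $\phi_{s,t}(B)$ and $f_t\circ\phi_{s,t}=f_s$, the path $f_t\circ\gamma$ lies in $\Omega_s$; as $F$ is a deck transformation of $f_t$ one checks that $f_s(\phi_{s,t}^{-1}(F(x)))=f_t(F(x))=f_t(x)=(f_t\circ\gamma)(0)$, so I can lift $f_t\circ\gamma$ through the covering $f_s$ to a path $\tilde{\gamma}$ in $B$ with $\tilde{\gamma}(0)=\phi_{s,t}^{-1}(F(x))$. Then both $\phi_{s,t}\circ\tilde{\gamma}$ and $F\circ\gamma$ are paths in $B$ satisfying $f_t\circ(\phi_{s,t}\circ\tilde{\gamma})=f_s\circ\tilde{\gamma}=f_t\circ\gamma=f_t\circ(F\circ\gamma)$, and they share the initial point $F(x)$. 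Uniqueness of liftings with respect to $f_t$ forces $\phi_{s,t}\circ\tilde{\gamma}=F\circ\gamma$; evaluating at $1$ yields $F(y)=\phi_{s,t}(\tilde{\gamma}(1))\in\phi_{s,t}(B)$, a contradiction.

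I do not expect a serious obstacle: the only subtlety is the bookkeeping to guarantee that every lifting invoked actually exists, i.e.\ that $f_t\circ\gamma$ really lands in $\Omega_s=f_s(B)$ and that the chosen starting point of $\tilde{\gamma}$ maps correctly under $f_s$. Both facts are immediate from the defining relation $f_s=f_t\circ\phi_{s,t}$ together with $f_t\circ F=f_t$, so the argument is essentially a transcription of the earlier Claim.
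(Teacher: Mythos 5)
Your proof is correct and is essentially the paper's own argument: the paper simply notes that the proof is the same as the Claim inside Theorem \ref{genapcov} and omits the details, and your transcription via the dictionary $\Psi\leftrightarrow f_t$, $g\leftrightarrow\phi_{s,t}$, $f\leftrightarrow f_s$ is exactly the intended one, including the verification that $f_t\circ\gamma$ lands in $\Omega_s$ and that the lift through $f_s$ starts at $\phi_{s,t}^{-1}(F(x))$.
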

\proof
The proof is similar to argument used in  the Claim in Theorem \ref{genapcov}, then we omit it.
\endproof

Example \ref{annuex}, Example \ref{exball} and (univalent) Loewner chains are Loewner chains of covering mappings where the fundamental group of the images does not change. We want to study these kind of chains in detail.

\begin{proposition}\label{invphic}
Let $(f_t)_{t\geq0}$ be a Loewner chain of covering mappings and let be $0\leq s \leq t$, then the following are equivalent

(1) the morphism of groups $i^*_{s,t}:\pi_1(\Omega_s)\longrightarrow \pi_1(\Omega_t)$ is an isomorphism;

(2) for each $F\in DT(f_t)$, $F(\phi_{s,t}(B))=\phi_{s,t}(B)$.
\end{proposition}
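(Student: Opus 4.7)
The plan is to pass through an intermediate characterization: both (1) and (2) will be shown equivalent to the connectedness of $f_t^{-1}(\Omega_s)$. For (2), this is essentially encoded by Proposition \ref{invphi}, since the connected components of $f_t^{-1}(\Omega_s)$ are precisely the translates $F(\phi_{s,t}(B))$ as $F$ runs over cosets of the stabilizer $H:=\{F\in DT(f_t): F(\phi_{s,t}(B))=\phi_{s,t}(B)\}$ in $DT(f_t)$; so (2) says exactly that this union has a single component, namely $\phi_{s,t}(B)$ itself. By Proposition \ref{p1iso} the morphism $i^*_{s,t}$ is already injective, so (1) reduces to its surjectivity.

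To prove (1)$\Rightarrow$(2), I fix $F\in DT(f_t)$ and pick any path $\tilde{\sigma}$ in $B$ from $0$ to $F(0)$; then $\gamma:=f_t\circ\tilde{\sigma}$ is a loop in $\Omega_t$ based at $0$. Surjectivity of $i^*_{s,t}$ produces a loop $\delta$ in $\Omega_s$ based at $0$ such that $i_{s,t}\circ\delta$ is homotopic to $\gamma$ in $\Omega_t$ rel endpoints. Let $\tilde{\delta}$ be the lift of $\delta$ via $f_t$ starting at $0$; by Lemma \ref{teclem}, $\tilde{\delta}([0,1])\subseteq\phi_{s,t}(B)$, and uniqueness of lifting applied to the homotopic pair $(i_{s,t}\circ\delta,\gamma)$ gives $\tilde{\delta}(1)=\tilde{\sigma}(1)=F(0)$. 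Thus $F(0)\in\phi_{s,t}(B)$; since also $0\in\phi_{s,t}(B)$, the intersection $F(\phi_{s,t}(B))\cap\phi_{s,t}(B)$ contains $F(0)$, and Proposition \ref{invphi} forces equality.

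For the converse (2)$\Rightarrow$(1), given $[\gamma]\in\pi_1(\Omega_t,0)$ I lift $\gamma$ to $\tilde{\gamma}$ in $B$ with $\tilde{\gamma}(0)=0$, and take $F\in DT(f_t)$ to be the unique deck transformation sending $0$ to $\tilde{\gamma}(1)$. Assumption (2) places $F(0)$ inside $\phi_{s,t}(B)$, which is simply connected as it is biholomorphic to $B$ via $\phi_{s,t}$; hence I can join $0$ to $F(0)$ by a path $\tilde{\sigma}$ entirely contained in $\phi_{s,t}(B)$. Setting $\sigma:=f_t\circ\tilde{\sigma}$, which lies in $f_t(\phi_{s,t}(B))=\Omega_s$, yields a loop in $\Omega_s$ based at $0$ whose lift in $B$ under $f_t$ from $0$ is $\tilde{\sigma}$ and ends at $\tilde{\gamma}(1)$; uniqueness of endpoints for lifts of homotopic loops gives $i^*_{s,t}([\sigma])=[\gamma]$. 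The conceptual core throughout is the combined use of Lemma \ref{teclem} (to control where lifts of $\Omega_s$-paths land) and Proposition \ref{invphi} (the dichotomy for $F$-translates of $\phi_{s,t}(B)$); once these are invoked, both implications reduce to routine applications of the path-lifting property.
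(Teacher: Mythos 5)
Your proof is correct and follows essentially the same route as the paper's: for (1)$\Rightarrow$(2) you reduce via Proposition \ref{invphi} to showing $F(0)\in\phi_{s,t}(B)$ and obtain this by lifting a loop homotopic into $\Omega_s$ and applying Lemma \ref{teclem}, while for (2)$\Rightarrow$(1) you use that the endpoint $\tilde{\gamma}(1)=F(0)$ lies in the simply connected set $\phi_{s,t}(B)$ to homotope $\gamma$ to a loop in $\Omega_s$. The only cosmetic difference is that you invoke the monodromy correspondence for the universal covering where the paper explicitly lifts and pushes down homotopies; the intermediate reformulation via connectedness of $f_t^{-1}(\Omega_s)$ is a harmless framing device that you do not actually rely on.
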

\proof
(1)$\Rightarrow$(2) Thanks to Proposition \ref{invphi}, it is sufficient to prove that $F(0)\in\phi_{s,t}(B)$. Let be $\tilde{\gamma}$ a path between $0$ and $F(0)$, and denote $\gamma=f_t\circ\tilde{\gamma}$, (notice that $\gamma(1)=f_t(\tilde{\gamma}(1))=f_t(F(0))=f_t(0)=0$, i.e. it is a closed path). By surjectivity of $i^*_{s,t}$, there exists an homotopy $G:[0,1]\times[0,1]\longrightarrow\Omega_t$ such that $G(0,\cdot)=\gamma$ and $G(1,\cdot)=:\sigma$ is a path in $\Omega_s$. Now $G$ can be lift to an homotopy $\tilde{G}$ in $B$ such that $\tilde{G}(0,\cdot)=\tilde{\gamma}$. Thanks to Lemma \ref{teclem} $\tilde{G}(1,\cdot)$ is a path in $\phi_{s,t}(B)$. Finally, 
$$F(0)=\gamma(1)=\tilde{G}(0,1)=\tilde{G}(1,1)\in\phi_{s,t}(B) $$

(2)$\Rightarrow$(1) It is sufficient to show that every closed path contained in $\Omega_t$ is homotopic to a path in $\Omega_s$: let $\gamma:[0,1]\rightarrow \Omega_t$ be a continuous path such that $\gamma(0)=\gamma(1)=0$, then $\gamma$ can be lifted to $\tilde{\gamma}$ in $B$ such that $f_t\circ \tilde{\gamma}=\gamma$ and $\tilde{\gamma}(0)=0$. By assumption $\tilde{\gamma}(1)\in\phi_{s,t}(B)$, hence there exists an homotopy $\tilde{G}:[0,1]\times[0,1]\rightarrow B$ such that $\tilde{G}(\cdot,0)=\tilde{\gamma}$ and $\tilde{G}(\cdot,1)$ is a path in $\phi_{s,t}(B)$ that connects $0$ to $\tilde{\gamma}(1)$. Now consider the homotopy $G:=f_t\circ\tilde{G}$, we have that $G(\cdot,0)=\gamma$ whereas $G(\cdot,1)$ is a closed path in $\Omega_s$, as needed.
\endproof

\begin{definition}
Let $(f_t)_{t\geq0}$ be a Loewner chain of covering mapping and let $I\subseteq [0,\infty)$ be an interval. We say that $(f_t)_{t\geq0}$ is $I$-\textit{stable} if for each $s,t\in I$ with $s<t$, one (and, consequently, both) of the properties in Proposition \ref{invphic} holds.
Finally, we say simply that $(f_t)_{t\geq0}$ is \textit{stable} if it is $[0,\infty)$-stable.
\end{definition}

\begin{remark}
Let $I\subseteq [0,\infty)$ be an interval and let $(f_t)_{t\geq0}$ be a $I$-stable Loewner chain of covering mappings and $(\phi_{s,t})_{0\leq s\leq t}$ the associated evolution family. Fix $s,t\in I$ with $s<t$ and consider $F\in DT(f_t$), then by Proposition \ref{invphic} the function $\phi_{s,t}^{-1}\circ F\circ\phi_{s,t}$ is an automorphism of $B$ and furthermore 
$$f_s\circ(\phi_{s,t}^{-1}\circ F\circ\phi_{s,t})=f_t\circ F\circ\phi_{s,t}=f_t\circ\phi_{s,t}=f_s.$$
i.e. $(\phi_{s,t}^{-1}\circ F\circ\phi_{s,t})$ is in $DT(f_s)$.
Therefore, for each $F\in DT(f_t$) there exists $G\in DT(f_s$) such that
$$F\circ \phi_{s,t} =\phi_{s,t}\circ G. $$
\end{remark}

Our next aim is to study the chain $(g_t)_{t\geq0}$: 

\begin{definition}
Let $\Gamma$ be a subgroup of Aut($\Cn$).
	
$\bullet$ An univalent function $f:B\longrightarrow\Cn$ is \textit{$\Gamma$-invariant} if $F(f(B))=f(B)$ for each $F\in\Gamma$;
	
$\bullet$ A normal Loewner chain $(f_t)_{t\geq0}$ is a \textit{$\Gamma$-invariant Loewner chain} if $f_t$ is $\Gamma$-invariant for each $t\geq0$.
\end{definition}

In the following remark, we show the link between stable Loewner chains of covering mappings and $\Gamma$-invariant Loewner chains.

\begin{remark}\label{dual}
Let $(f_t)_{t\geq0}$ be a stable Loewner chain of covering mappings and consider $\Psi$ and $(g_t)_{t\geq0}$ as in (\ref{facteq}), then the normal Loewner chain $(g_t)_{t\geq0}$ is $DT(\Psi)$-invariant.
	
Conversely, let $\Psi:\Cn\longrightarrow\Lambda\subseteq\Cn$ be a covering mapping and $(g_t)_{t\geq0}$ be a $DT(\Psi)$-invariant Loewner chain, then $f_t:=\Psi\circ g_t$ is a stable Loewner chain of covering mappings.
\end{remark}

We conclude this section with a simple example in dimension one of a non stable Loewner chain of covering mappings.

\begin{example}
For each $t\geq0$, let
$$\Omega_t:=\C\backslash\bigl(\{z\in\C: \im[z]=0, \re[z]\leq-t-1\}\cup\{-1\}\bigr).$$
The family $\{\Omega_t\}_{t\geq0}$ respects the assumptions of Proposition \ref{CKcov}, then there exists an unique Loewner chain of covering mappings $(f_t)_{t\geq0}$, a continuous increasing function \\$\beta:[0,\infty)\longrightarrow[0,\infty)$ and $\alpha_0>0$ such that $f_{t}(\D)=\alpha_0\Omega_{\beta^{-1}(t)}$. Now $f_0$ is a biholomorphism, instead $f_t$ is a covering mapping with $\pi_1(f_t(\D))\cong\Z$ for each $t>0$. Then  $(f_t)_{t\geq0}$ is $(0,\infty)$-stable but it is not stable.
\end{example}

\section{Embedding problem in one dimension}

In this section we investigate the problem of embedding of covering mappings in Loewner chains. As in the univalent case, we said that a covering mapping $f$ on $B$ embeds into a \textit{Loewner chain of covering mapping} if there exists a Loewner chain of covering mappings $(f_t)_{t\geq0}$ such that $f_0=f$. 

In the univalent case, we have this powerful Theorem proved by Pommerenke in \cite{PMK}.

\begin{theorem}\label{emb1d}
Let $f:\D\longrightarrow\C$ be a normalized univalent function, then $f$ embeds into a Loewner chain.
\end{theorem}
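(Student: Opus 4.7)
The plan is to follow Pommerenke's classical strategy: approximate $f$ by single-slit mappings (for which Loewner chains can be written down explicitly by retracting the slit), then extract a locally uniform subsequential limit and verify the axioms of a Loewner chain. The external inputs are (i) the Koebe growth/distortion theorems, which provide normality, and (ii) the density of single-slit normalized conformal maps in the class of normalized univalent functions on $\D$, which reduces the problem to an essentially elementary construction.

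First I would approximate. By a standard Carathéodory kernel argument, any normalized univalent $f:\D\to\C$ is a locally uniform limit of normalized conformal maps $f_n:\D\to\Omega_n$ where $\Omega_n=\C\setminus\Gamma_n$ for some Jordan arc $\Gamma_n:[0,\infty)\to\C$ going to infinity. For each such $f_n$ I would construct a Loewner chain explicitly by retracting the slit. Let $h_{n,\tau}:\D\to\C\setminus\Gamma_n([\tau,\infty))$ denote the unique conformal mapping with $h_{n,\tau}(0)=0$ and $h_{n,\tau}'(0)>0$. By the Schwarz lemma the map $\tau\mapsto h_{n,\tau}'(0)$ is strictly increasing and continuous, taking the value $1$ at $\tau=0$ and tending to $+\infty$ as $\tau\to\infty$ (the domain Carathéodory-converges to $\C$). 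Hence there is a unique continuous reparametrization $\tau_n$ such that $f_{n,t}:=h_{n,\tau_n(t)}$ satisfies $f_{n,t}'(0)=e^t$; the family $(f_{n,t})_{t\geq0}$ is then a normalized Loewner chain with $f_{n,0}=f_n$ and nested images.

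Next I would extract a limit. By the Koebe growth theorem the family $\{e^{-t}f_{n,t}\}_{n,t}$ is locally uniformly bounded on $\D$, hence normal in $z$. The evolution families $\phi_{n,s,t}:=f_{n,t}^{-1}\circ f_{n,s}$ satisfy (EF1--3) and, by the argument recalled in Section 2 (which uses only (EF1--3) and the Schwarz lemma), obey a locally uniform Lipschitz estimate in $t$ with constants independent of $n$. A diagonal extraction over $t\in\Q_{\geq 0}$, together with this uniform Lipschitz control, yields a subsequence along which $f_{n,t}$ converges locally uniformly on $\D\times[0,\infty)$ to a family $(f_t)_{t\geq0}$. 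Each $f_t$ is univalent by Hurwitz, the normalizations $f_t(0)=0$ and $f_t'(0)=e^t$ pass to the limit, and $f_0=f$ by the choice of approximants.

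The main obstacle is verifying the inclusion $\Omega_s=f_s(\D)\subseteq f_t(\D)=\Omega_t$ for $s\leq t$ in the limit chain, since univalence and normalization are preserved under locally uniform limits but set-theoretic inclusion of images is not automatic. I would handle it through Carathéodory kernel convergence: from $f_{n,t}\to f_t$ locally uniformly and normalized, the domain $\Omega_{n,t}$ converges to $\Omega_t$ in the kernel sense with respect to the origin; combining with the evident inclusions $\Omega_{n,s}\subseteq\Omega_{n,t}$, any $w\in\Omega_s$ is the limit of points $w_n\in\Omega_{n,s}\subseteq\Omega_{n,t}$, hence lies in the kernel $\Omega_t$. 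A secondary delicate point, hidden in the first step, is the density of slit mappings in the normalized univalent class in a form compatible with the normalization at $0$; this is itself a Carathéodory-type approximation lemma and constitutes the classical technical heart of Pommerenke's approach.
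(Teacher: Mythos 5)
The paper does not prove this statement at all---it is quoted directly from Pommerenke \cite{PMK}---and your outline is precisely the classical Loewner--Pommerenke argument from that source: density of single-slit maps in $\S$, explicit chains for slit maps by retracting the slit with the conformal radius reparametrized to $e^t$, normality from the growth theorem, and Carath\'eodory kernel convergence to recover the image inclusions in the limit. The sketch is correct as stated (the one phrase to tighten is ``$w$ is the limit of points $w_n\in\Omega_{n,s}$'': kernel membership requires a full neighborhood of $w$ to lie eventually in $\Omega_{n,s}\subseteq\Omega_{n,t}$, which kernel convergence of $\Omega_{n,s}\to\Omega_s$ does supply), so nothing further is needed.
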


First of all, we need the following topological result.

\begin{proposition}\label{complannu}
Let $\Omega\subseteq\R^2$ be a domain with $\pi_1(\Omega)\cong\Z$, then $\R^2\backslash\Omega$ has only one bounded connected component.
\end{proposition}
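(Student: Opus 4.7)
My plan is to pass to the one-point compactification $S^2 = \R^2 \cup \{\infty\}$ and apply Alexander duality to the compact set $K := S^2 \setminus \Omega$. Since $\pi_1(\Omega) \cong \Z$ is already abelian, $H_1(\Omega;\Z) \cong \Z$; Alexander duality in \v{C}ech cohomology then gives $\tilde{\check{H}}^0(K;\Z) \cong \tilde{H}_1(\Omega;\Z) \cong \Z$, so $K$ has exactly two connected components, say $K_1 \ni \infty$ and $K_2 \not\ni \infty$. The set $K_2$ is closed in $S^2$ and disjoint from $\infty$, hence compact in $\R^2$, in particular bounded.

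Next I would verify that $K_2$ is itself a connected component of $\R^2 \setminus \Omega$: any strictly larger connected subset of $\R^2 \setminus \Omega \subseteq K$ would be forced to intersect $K_1$, contradicting the fact that $K_1$ and $K_2$ are distinct components of $K$. Conversely, any other component $C$ of $\R^2 \setminus \Omega$ is a connected subset of $K$ disjoint from $K_2$, and the same reasoning forces $C \subseteq K_1 \setminus \{\infty\}$.

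To finish I would invoke the boundary bumping theorem from continuum theory: for any continuum $X$ and any nonempty proper open subset $U \subseteq X$, every connected component of $U$ has a limit point in $X \setminus U$. Applied with $X = K_1$ and $U = K_1 \setminus \{\infty\}$ (the trivial case $K_1 = \{\infty\}$, i.e.\ $\Omega = \R^2 \setminus K_2$, yields the conclusion immediately), this says that every component $C$ of $K_1 \setminus \{\infty\}$ satisfies $\infty \in \overline{C}^{\,S^2}$, which is precisely the statement that $C$ is unbounded in $\R^2$. Combining all of this, $K_2$ is the unique bounded component of $\R^2 \setminus \Omega$.

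The main obstacle is this last step: Alexander duality only sees the connected components of $K$ inside $S^2$, and one must rule out the a priori possibility that $K_1$, after removal of the single point $\infty$, falls apart into additional \emph{bounded} components lying in $\R^2$. Boundary bumping is the cleanest tool to exclude this; without it, the two-component count from duality would only give an upper bound of one on the number of bounded components, not exact equality.
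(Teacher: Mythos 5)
Your proof is correct, but it takes a genuinely different route from the paper's. The paper first invokes the classification of planar domains (a domain with fundamental group $\Z$ is homeomorphic to a standard annulus), transports the foliation of that annulus by concentric circles to a family of Jordan curves $\gamma_r\subseteq\Omega$, applies the Jordan curve theorem to each $\gamma_r$, and realizes the two components of $S^2\setminus\Omega$ as nested intersections of continua. You instead obtain the component count of $K=S^2\setminus\Omega$ in one stroke from Alexander duality in \v{C}ech cohomology, using only $H_1(\Omega)\cong\Z$ (the abelianization of $\pi_1(\Omega)$, which is already abelian), with no need for the annulus structure at all. The trade-off: your argument relies on heavier machinery (duality for arbitrary compact subsets of $S^2$ requires the \v{C}ech theory), while the paper's is more elementary and hands-on; on the other hand, yours avoids the uniformization-type input and generalizes immediately (a planar domain with $H_1\cong\Z^k$ has exactly $k$ bounded complementary components). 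It is also worth noting that your last step --- using boundary bumping on the continuum $K_1$ to rule out that deleting $\infty$ from the unbounded component of $S^2\setminus\Omega$ creates new \emph{bounded} components --- addresses a point that the paper's proof passes over in silence when it asserts that it suffices to show $\Omega$ divides the sphere into two pieces; in that respect your write-up is the more complete of the two.
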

\proof
First of all, we embedded $\R^2$ into its one-point compactification: it is sufficient to prove that $\Omega\subseteq S^2$ divides the sphere in two connected components. Furthermore, it is a well known fact in Riemann surfaces theory that $\Omega$ has to be homeomorphic to the annulus $A:=\{x\in\R^2: 1<||x||<2\}$ \cite{AB}. Denote with $F$ an homeomorphism between $A$ and $\Omega$. Now, for each $r\in (1,2)$ we consider the curve $\gamma_r:=F(\{||x||=r\})\subseteq S^2$. Fix a point $x_0\in S^2\backslash\Omega$, for each $r\in(1,2)$ by the Jordan theorem $\gamma_r$ divides $S^2$ in two connected components: denote by with $A_r$ the component that contains $x_0$, and by $B_r$ the other one. Now we have that
\begin{equation}\label{settop}
S^2\backslash\Omega=S^2\backslash\bigcup_{r\in(1,2)}\gamma_r=\bigcap_{r\in(1,2)}(S^2\backslash\gamma_r)=\bigcap_{r\in(1,2)}(A_r\cup B_r)=\Bigl(\bigcap_{r\in(1,2)}A_r\Bigr)\cup\Bigl(\bigcap_{r\in(1,2)}B_r\Bigr).
\end{equation}
we observe that for each $r\neq r'$ we have $\overline{A_r}\subseteq A_{r'}$ or $\overline{A_{r'}}\subseteq A_{r}$. Then 
$$A:=\bigcap_{r\in(1,2)}A_r=\bigcap_{r\in(1,2)}\overline{A_r}$$ is the intersection of a decreasing family of connected compact sets , hence it is a non empty connected compact set (and the same holds for $B:=\bigcap_{r\in(1,2)}B_r$). Finally, by (\ref{settop})
$$S^2\backslash\Omega=A\cup B$$
and we are done.
\endproof

Finally we can prove the embedding theorem in one dimension.

\begin{theorem}\label{emb1dcov}
Let $f:\D\twoheadrightarrow\Omega\subseteq\C$ be a covering mapping, then it embeds into a Loewner chain of covering mappings if and only if $\pi_1(\Omega)\cong\{0\} $ or $\pi_1(\Omega)\cong\Z$.
\end{theorem}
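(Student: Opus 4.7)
The proof splits into necessity and sufficiency. For the \emph{necessity}, I plan to combine Proposition \ref{p1iso}, which yields an injection $\pi_{1}(\Omega) = \pi_{1}(\Omega_{0}) \hookrightarrow \pi_{1}(R(f_{t}))$, with Proposition \ref{fact}, which states that $R(f_{t}) \subseteq \C$ is a domain covered by $\C$. By the uniformization theorem, the only Riemann surfaces covered by $\C$ are $\C$, $\C \setminus \{0\}$, and tori; since tori are compact and cannot embed as open subsets of $\C$, it follows that $R(f_{t})$ is either $\C$ or $\C \setminus \{q\}$ for some $q$. Hence $\pi_{1}(R(f_{t}))$ is $\{0\}$ or $\Z$, and $\pi_{1}(\Omega)$, being isomorphic to a subgroup, must itself be $\{0\}$ or $\Z$.

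For the \emph{sufficiency}, I handle the two cases separately. If $\pi_{1}(\Omega) \cong \{0\}$, the covering $f \colon \D \twoheadrightarrow \Omega$ onto a simply connected domain is automatically univalent, so Theorem \ref{emb1d} embeds $f$ into a classical Loewner chain; each map in such a chain is univalent with simply connected (hence pseudoconvex) image, making this a Loewner chain of covering mappings. If $\pi_{1}(\Omega) \cong \Z$, I apply Proposition \ref{CKcov}(2): by Proposition \ref{complannu}, $\C \setminus \Omega$ has a unique bounded connected component $K$; I fix $p \in K$ (with $p \neq 0$ since $0 \in \Omega$) and aim to construct an increasing family $(\Omega_{t})_{t \geq 0}$ of annular subdomains of $\C \setminus \{p\}$ with $\Omega_{0} = \Omega$, each with $\pi_{1} \cong \Z$ and inclusion into $\C \setminus \{p\}$ inducing an isomorphism on $\pi_{1}$, $\bigcup_{t} \Omega_{t} = \C \setminus \{p\}$, and satisfying Carathéodory kernel convergence. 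Remark \ref{covuni} then provides unique normalized coverings $f_{t} \colon \D \twoheadrightarrow \Omega_{t}$ with $f_{t}(0) = 0$ and $f_{t}'(0) > 0$, with $f_{0} = f$ by uniqueness; Proposition \ref{CKcov}(2) then yields the desired Loewner chain of covering mappings after reparametrization, with the map at time zero equal to $f$.

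The \emph{main obstacle} is the construction of $(\Omega_{t})$. My plan is to lift to the universal cover: take a normalized covering $\psi \colon \C \twoheadrightarrow \C \setminus \{p\}$, for instance $\psi(z) = -p(e^{-z/p} - 1)$, with deck group $DT(\psi) \cong \Z$. Since a small loop in $\Omega$ around $K$ has winding number $\pm 1$ around $p \in K$, the inclusion $\Omega \hookrightarrow \C \setminus \{p\}$ induces an isomorphism on $\pi_{1}$; Lemma \ref{homeolift} then ensures that the lift $g \colon \D \to \C$ of $f$ with $g(0) = 0$ is injective, and a path-lifting argument shows that its image equals $V_{0} := \psi^{-1}(\Omega)$, which is connected, simply connected, and $DT(\psi)$-invariant. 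I will then construct an increasing family $(V_{t})_{t \geq 0}$ of simply connected $DT(\psi)$-invariant domains in $\C$ with $\bigcup_{t} V_{t} = \C$, and set $\Omega_{t} := \psi(V_{t})$; because $V_{t} = \psi^{-1}(\Omega_{t})$ and $\psi|_{V_{t}}$ is then a covering with deck group $DT(\psi) \cong \Z$, the family $(\Omega_{t})$ automatically inherits all the required properties. Constructing the $V_{t}$ is the technical heart of the argument: I plan to exploit that $V_{0}$ is topologically an infinite strip carrying a cyclic $DT(\psi)$-action (since $V_{0}/DT(\psi) \cong \Omega$ is a hyperbolic annulus), and to attach $DT(\psi)$-invariant collars at its two geometric ends continuously until $\C$ is exhausted, thereby ensuring both simple connectivity of each $V_{t}$ and Carathéodory kernel convergence.
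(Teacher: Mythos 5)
Your necessity argument and your treatment of the simply connected case coincide with the paper's: Proposition \ref{p1iso} together with the classification of plane domains covered by $\C$ gives necessity, and Pommerenke's Theorem \ref{emb1d} disposes of $\pi_1(\Omega)\cong\{0\}$. In the case $\pi_1(\Omega)\cong\Z$ you and the paper both reduce the problem to producing a continuous increasing family of annuli $(\Omega_t)_{t\ge0}$ with $\Omega_0=\Omega$ and range $\C$ minus a point, and then invoking Proposition \ref{CKcov}; the difference is where that family is built. The paper works entirely downstairs: using Proposition \ref{complannu} it fills the unique bounded complementary component $B$ to get the simply connected domain $\Omega\cup B$, grows that through simply connected domains until the complement is reduced to $B$ alone, and then applies the inversion $z\mapsto 1/(z-z_0)$ with $z_0\in B$ so that the second stage again amounts to growing a simply connected domain, shrinking $B$ to the point $z_0$ in the limit. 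Everything is thus reduced to the classical, non-equivariant fact that a simply connected domain embeds in an increasing exhaustion of $\C$. You instead pass to the universal cover $\psi:\C\twoheadrightarrow\C\setminus\{p\}$; your verification that $V_0=\psi^{-1}(\Omega)$ is connected, simply connected and $DT(\psi)$-invariant is correct and uses exactly the fact that $i^*:\pi_1(\Omega)\to\pi_1(\C\setminus\{p\})$ is an isomorphism.

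The gap is the step you yourself flag as the technical heart: the existence of an increasing, kernel-continuous family of simply connected, $DT(\psi)$-invariant domains $V_t$ with prescribed $V_0$ and $\bigcup_t V_t=\C$ is asserted, not proved, and ``attaching invariant collars at the two ends'' is not yet an argument --- one must keep each $V_t$ simultaneously open, connected, simply connected and translation-invariant while exhausting all of $\C$, and the two ends of the strip are not symmetric (one of them must eventually absorb the entire translation direction, which is exactly what the paper's inversion step is engineered to handle). This equivariant exhaustion is the one genuinely hard point of your route, and it is precisely what the paper's fill-the-hole-and-invert construction avoids; if you wish to keep the covering-space formulation, the cheapest repair is to build $(\Omega_t)$ downstairs as the paper does and then set $V_t:=\psi^{-1}(\Omega_t)$, after which your observation about full preimages does the rest. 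Two minor points, both shared with the paper: Proposition \ref{CKcov} produces the chain $(\alpha_0^{-1}f_{\beta(t)})_{t\ge0}$, so the map at time zero is $f$ only after normalizing $f$; and one should check (or arrange) that the intermediate domains $\tilde\Omega_t\setminus B$, respectively $\psi(V_t)$, remain connected with $\pi_1\cong\Z$.
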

\proof
\textbf{Necessary condition:} consider $\Lambda$ the Loewner range of the chain. It is a standard fact in theory of Riemann surfaces that the domains of $\C$ covered by the complex plain are $\C$ itself and $\C\backslash\{point\}$ \cite{AB}, and then the group $\pi_1(\Lambda)$ is trivial in first case and isomorphic to $\Z$ in the second one. We conclude using that $\pi_1(\Omega)$ is isomorphic to a subgroup of $\pi_1(\Lambda)$ (Proposition \ref{p1iso}).

\textbf{Sufficient condition:} If $\pi_1(\Omega)$ is trivial then by covering spaces theory $f$ is univalent (and then we are in the classical Loewner theory) and so we can conclude with the aim of Theorem \ref{emb1d}. Otherwise if $\pi_1(\Omega)\cong\Z$ the domain $\Omega$ is an annulus: we want to construct a continuous  increasing family of annuli and use Proposition \ref{CKcov}. Thanks to Proposition \ref{complannu}, the complement of $\Omega$ in $\C$ has only one bounded component $B$. Now if $\C\backslash\Omega$ has unbounded component, then $\tilde{\Omega}:=\Omega\cup B$ is a simply connected domain strictly contained in $\C$, and then we can construct an increasing family of simply connected domains $(\tilde{\Omega}_t)_{t\in[0,T]}$ such that $\tilde{\Omega}_0=\tilde{\Omega}$ and $\tilde{\Omega}_T=\C\backslash B$. We set $\Omega_t:=\tilde{\Omega}_t\backslash B$ for $t\in[0,T)$: we constructed the first part of the chain, eliminating the unbounded parts of $\C\backslash\Omega$. 
Now consider a point $z_0\in B$ and the biholomorphism $F(z)=1/(z-z_0)$: we have that $\tilde{\Omega}_T=F(\C\backslash B)\cup\{0\}$ is a simply connected domain of $\C$, then it can be embedded into a continuous  increasing family of simply connected domains $(\tilde{\Omega}_t)_{t\in[T,\infty)}$. Finally, we define for each $t\geq T$ the annuli $\Omega_t:=F^{-1}(\tilde{\Omega}_t\backslash\{0\})$. Now the family $\{\Omega_t\}_{t\geq0}$ respects the assumption of Proposition \ref{CKcov} and $\Omega_0=\Omega$, then $f$ embeds into a Loewner chain of covering mappings.
\endproof

Finally using Remark \ref{dual} we obtain the following 

\begin{corollary}
Let $f:\D\longrightarrow \C$ be a normalized univalent function invariant with respect to $F(z):=z+i$, i.e. $F(f(\D))=f(\D)$, then $f$ can be embedded into a $F$-invariant Loewner chain.
\end{corollary}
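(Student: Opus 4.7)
The plan is to construct the desired chain via Remark \ref{dual}. Let $\Psi(z) := \frac{e^{2\pi z}-1}{2\pi}$, the normalized universal covering $\C \to \C \setminus \{-\frac{1}{2\pi}\}$ with $\Psi(0)=0$, $\Psi'(0)=1$ and $DT(\Psi)=\langle F \rangle$. Since $f(\D)$ is $F$-invariant it is saturated with respect to $\Psi$, so $h := \Psi \circ f$ is a normalized holomorphic covering from $\D$ onto $\omega_0 := \Psi(f(\D))$. Simply connectedness of $f(\D)$ together with the fact that $\Psi|_{f(\D)}: f(\D) \to \omega_0$ has deck group $\langle F \rangle \cong \Z$ forces $\pi_1(\omega_0) \cong \Z$, generated by a loop winding once around $-\frac{1}{2\pi}$; in particular $-\frac{1}{2\pi}$ lies in the unique bounded complementary component of $\omega_0$.

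Apply the sufficient direction of Theorem \ref{emb1dcov} to $h$, making the explicit choice of distinguished point $z_0 := -\frac{1}{2\pi}$ in the construction of the increasing family of annuli. This produces a Loewner chain of covering mappings $(h_t)_{t \geq 0}$ with $h_0 = h$ and Loewner range $R(h_t) = \C \setminus \{-\frac{1}{2\pi}\}$. Every image $\omega_t := h_t(\D)$ in the constructed chain still has $\pi_1(\omega_t) \cong \Z$ generated by a loop winding once around $-\frac{1}{2\pi}$, so the inclusion morphisms $i^*_{s,t}$ send generator to generator and are therefore isomorphisms; hence $(h_t)$ is stable.

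Factor $h_t = \tilde\Psi \circ g_t$ via Proposition \ref{fact}, where $\tilde\Psi: \C \to \C \setminus \{-\frac{1}{2\pi}\}$ is the unique normalized universal covering and $(g_t)$ is a normal Loewner chain. The key identification is $\tilde\Psi = \Psi$: every normalized universal covering $\C \to \C \setminus \{p\}$ vanishing at $0$ has the form $z \mapsto -p(e^{\lambda z}-1)$ with derivative $-p\lambda$ at $0$, and the Loewner normalization $h_t'(0) = \tilde\Psi'(0)\cdot g_t'(0) = e^t$ together with $g_t'(0) = e^t$ forces $\tilde\Psi'(0)=1$; with $p=-\frac{1}{2\pi}$ this gives $\lambda=2\pi$ and hence $\tilde\Psi=\Psi$.

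By Remark \ref{dual} applied to the stable chain $(h_t)$, the chain $(g_t)$ is $DT(\tilde\Psi) = \langle F \rangle$-invariant, hence $F$-invariant. Finally, $g_0 = f$ because $\Psi \circ g_0 = h = \Psi \circ f$ with $g_0(0) = f(0) = 0$, so the two univalent maps differ by a deck transformation of $\Psi$ fixing the origin, which must be the identity since $F^n(0) = ni$ is nonzero for every $n \neq 0$. The main technical obstacle is the identification $\tilde\Psi = \Psi$, which pins down the deck group controlling the invariance of the resulting chain.
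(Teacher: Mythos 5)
Your proof is correct and follows exactly the route the paper intends: quotient by $\langle F\rangle$ via the covering $\Psi$, embed the resulting covering of an annulus using Theorem \ref{emb1dcov}, and pull back through Remark \ref{dual}. The paper states this corollary with only the one-line pointer to Remark \ref{dual}; you have supplied the details it leaves implicit (the choice $z_0=-\tfrac{1}{2\pi}$ guaranteeing stability, the identification $\tilde\Psi=\Psi$, and $g_0=f$ by uniqueness of lifts), all of which check out.
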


We conclude this section with an application in several complex variables. 

\begin{theorem}\label{aprxpol}
Let $n\geq2$ and $F:\D^n\longrightarrow \Cn$ be a function of the form
$$F(z_1,\dots,z_n)=(f^{(1)}(z_1),\dots,f^{(n)}(z_n)) $$
where $f^{(j)}:\D\twoheadrightarrow\Omega_j\subseteq\C$ is univalent or a covering mapping with $\pi_1(\Omega_j)\cong\Z$ for each $j=1,\dots,n$. Then $F$ can be approximated uniformly on compacta of $\D^n$ by entire covering mappings.
\end{theorem}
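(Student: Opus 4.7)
The plan is to reduce Theorem \ref{aprxpol} to Theorem \ref{apprxcov} applied to the polydisk $\D^n$, viewed as the unit ball of $\C^n$ with respect to the sup norm $\|z\|_\infty := \max_j |z_j|$; this is legitimate because Theorem \ref{apprxcov} applies to the ball with respect to an arbitrary norm. I would first normalize componentwise: setting $w_j := f^{(j)}(0)$, $\lambda_j := (f^{(j)})'(0) \neq 0$, $w := (w_1,\dots,w_n)$, and $A := \mathrm{diag}(\lambda_1,\dots,\lambda_n)$, I replace $F$ by $\tilde F := A^{-1}(F - w)$. Then $\tilde F$ is the product of the normalized one-variable coverings $\tilde f^{(j)} := \lambda_j^{-1}(f^{(j)} - w_j)$, and any approximation $\Phi_k \to \tilde F$ by entire holomorphic covering mappings gives back $w + A\Phi_k \to F$, still by entire holomorphic covering mappings of $\C^n$.

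Next, I would apply Theorem \ref{emb1dcov} in each coordinate: since each $\tilde f^{(j)}(\D)$ has fundamental group trivial or $\Z$, there exists a one-dimensional Loewner chain of covering mappings $(\tilde f^{(j)}_t)_{t\geq 0}$ with $\tilde f^{(j)}_0 = \tilde f^{(j)}$. I would then form the product family
\[
\tilde F_t(z_1,\dots,z_n) := (\tilde f^{(1)}_t(z_1),\dots,\tilde f^{(n)}_t(z_n))
\]
and verify that $(\tilde F_t)_{t\geq 0}$ is a Loewner chain of covering mappings on the ball $\D^n$. The axioms to check are: each $\tilde F_t$ is a covering mapping onto $\tilde\Omega^{(1)}_t \times \cdots \times \tilde\Omega^{(n)}_t$ (a product of covering mappings is a covering mapping); the image is pseudoconvex in $\C^n$ (each planar factor is automatically pseudoconvex, and a product of pseudoconvex domains is pseudoconvex); the normalization $\tilde F_t(0) = 0$ and $(d\tilde F_t)_0 = e^t\id$ is diagonal and inherited from the factors; and the nesting of images follows factor by factor from the nesting of each one-dimensional chain.

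Once this is in place, Theorem \ref{apprxcov} applied to $\tilde F = \tilde F_0$ yields entire holomorphic covering mappings of $\C^n$ converging to $\tilde F$ uniformly on compacta of $\D^n$, and the affine change of coordinates $w + A(\cdot)$ then gives the desired approximation of $F$. The argument is essentially bookkeeping: the substantive inputs are Theorem \ref{emb1dcov} used componentwise and Theorem \ref{apprxcov} applied at the end. I expect the only real point of care to be the componentwise normalization, which is needed so that Theorem \ref{emb1dcov} applies (with its implicit $f(0)=0$, $f'(0)=1$ built into the notion of Loewner chain) and so that the diagonal derivative $(d\tilde F_t)_0 = e^t\id$ comes out correctly in the product chain; everything else, most notably the identification of $\D^n$ as a sup-norm ball and the standard facts about products of covering maps and of pseudoconvex domains, is routine.
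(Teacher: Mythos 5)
Your proposal is correct and follows essentially the same route as the paper: embed each $f^{(j)}$ into a one-dimensional Loewner chain of covering mappings via Theorem \ref{emb1dcov}, form the product chain on $\D^n$ (viewed as the unit ball for the sup norm), and conclude with Theorem \ref{apprxcov}. Your explicit componentwise normalization is a point of bookkeeping the paper leaves implicit, but it does not change the argument.
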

\proof
First of all, it is easy to see that $F$ is a covering mapping from $\D^n$ to $\Omega_1\times\dots\times\Omega_n$. By Theorem \ref{emb1dcov}, for each $j=1,\dots,n$ the mapping $f^{(j)}$ can be embedded into a Loewner chain of covering mappings $(f^{(j)}_t)_{t\geq0}$, then we define the chain 
$$F_t(z_1,\dots,z_n):=(f^{(1)}_t(z_1),\dots,f^{(n)}_t(z_n)). $$
Now $(F_t)_{t\geq0}$ is a Loewner chain of covering mappings on $\D^n$ with associated evolution family 
$$\Phi_{s,t}(z_1,\dots,z_n):=(\phi^{(1)}_{s,t}(z_1),\dots,\phi^{(n)}_{s,t}(z_n))$$
where $(\phi^{(j)}_{s,t})_{0\leq s\leq t}$ are the evolution families associated to the chains $(f^{(j)}_t)_{t\geq0}$, for each $j=1,\dots,n$. Obviously $F$ embeds into $(F_t)_{t\geq0}$ and then we can conclude using Theorem \ref{apprxcov}.
\endproof

\section{Open questions}

We conclude with some open questions.

\begin{question}
Let $\Gamma\leq$Aut($\Cn$) be a subgroup and $f:B\longrightarrow\Cn$ be a $\Gamma$-invariant univalent function. Can $f$ be embedded into a Loewner chain? Can it be embedded into a $\Gamma$-invariant Loewner chain?
\end{question}

We notice that the assumption of pseudoconvexity is used only in  Proposition \ref{neccond} and Theorem \ref{genapcov}. 

\begin{question}
Can we eliminate the pseudoconvexity assumption? Is the image of a covering mapping that embeds into a Loewner chain of covering mappings necessary a pseudoconvex domain?
\end{question}

\end{document}